\documentclass[10pt]{amsart}
\usepackage[a4paper]{geometry}
\usepackage{stile_en}

\usepackage{graphicx}

\usepackage[utf8]{inputenc}
\usepackage[english]{babel}
\usepackage[T1]{fontenc}

\usepackage{type1cm}

\usepackage{amsmath, amsfonts, amssymb}
\usepackage[shortlabels]{enumitem}
\usepackage[all]{xy}

\usepackage{abbreviazioni}

\begin{document}

\title{Zeros of combinations of Euler products for $\sigma>1$}

\author{Mattia Righetti}

\address{Dipartimento di Matematica, Universit\`{a} di Genova}
\email{righetti@dima.unige.it}           

\date{}

\begin{abstract}
In this paper we consider Dirichlet series absolutely converging for $\sigma>1$ with an Euler product, natural bounds on the coefficients and satisfying orthogonality relations of Selberg type. Let $N\geq 1$, $F_1(s),...,F_N(s)$ be as above and $P(X_1,...,X_N)$ be a non-monomial polynomial with coefficients in the ring of $p$-finite Dirichlet series absolutely converging for $\sigma\geq 1$; then $P(F_1(s),\ldots,F_N(s))$ has infinitely many zeros for $\sigma>1$. Our result in particular applies to Artin $L$-functions, automorphic $L$-functions under the Ramanujan conjecture, and the elements of the Selberg class with polynomial Euler product under the Selberg orthonormality conjecture. This extends the work of Booker and Thorne \cite{booker}, who proved the same result for automorphic $L$-functions under the Ramanujan conjecture. Our proof avoids to use the properties of twists by Dirichlet characters, a key point in Booker and Thorne's proof, replacing them by results on the Dirichlet density of non-zero coefficients of $L$-functions of the above type.
\end{abstract}

\maketitle

\section{Introduction}
It is well known that linear combinations of $L$-functions may not satisfy the Riemann Hypothesis. For example, in 1936 Davenport and Heilbronn \cite{davenport1} proved that the Hurwitz zeta function
$$\zeta(s,a)=\sum_{n=0}^\infty \frac{1}{(n+a)^s}$$
has infinitely many zeros for $\sigma>1$ when $0<a<1$ is transcendental or rational with $a\neq \frac{1}{2}$. Note that, when $a=l/k$ is rational, $k^{-s}\zeta(s,l/k)$ may be written as a linear combination of Dirichlet $L$-functions $L(s,\chi)$, with $\chi$ varying among the Dirichlet characters mod $k$. The case $a$ irrational algebraic was settled successively by Cassels \cite{cassels}.

The idea of Davenport and Heilbronn, as pointed out by Bombieri and Ghosh \cite{bombierighosh}, was to apply Bohr's equivalence theorem to $\zeta(s,a)$. For a complete and general treatment of Bohr's equivalence theorem we refer to Chapter 8 of Apostol \cite{apostol}. Note that in \cite{apostol}, Bohr's equivalence theorem is stated for half-planes, but from the proof it is clear that the same holds for vertical strips.
\begin{theorem}[Bohr's equivalence theorem, {\cite{apostol}}]\label{theorem:bohr}
Let $F(s)=\sum_{n}a(n)\exp{-s\lambda_n}$ and $G(s)=\sum_n b(n) \exp{-s\lambda_n}$ be \emph{equivalent} (see \cite[\S8.7]{apostol}) general Dirichlet series (see \cite[\S8.2]{apostol}) with abscissa of absolute convergence $\sigma_a$. Then in any vertical strip $\sigma_a\leq\sigma_1<\sigma<\sigma_2$ the functions $F(s)$ and $G(s)$ take the same set of values.
\end{theorem}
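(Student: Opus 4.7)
The plan is to prove Bohr's equivalence theorem via the classical lift to an infinite-dimensional torus, combined with Kronecker's equidistribution theorem and Hurwitz's theorem to upgrade density to equality of values.

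First I would fix a $\mathbb{Q}$-linearly independent set $\{\beta_k\}_{k\geq 1}\subset\mathbb{R}$ such that each $\lambda_n=\sum_k q_{n,k}\beta_k$ is a finite rational combination of the $\beta_k$; this is possible because the $\mathbb{Q}$-span of $\{\lambda_n\}_{n\geq 1}$ is countable-dimensional. Recall that equivalence of $F$ and $G$ in the sense of Apostol \cite[\S8.7]{apostol} amounts to the existence of a real sequence $(\theta_k)_{k\geq 1}$ such that $b(n)=a(n)\exp\bigl(i\sum_k q_{n,k}\theta_k\bigr)$ for every $n$. I would then introduce the Bohr lift
$$\widetilde F(s,\mathbf{x})=\sum_n a(n)\exp\Bigl(-s\lambda_n+i\sum_k q_{n,k}x_k\Bigr),$$
which converges absolutely for $\sigma>\sigma_a$ and any $\mathbf{x}\in\mathbb{R}^{\mathbb{N}}$, so that $F(s+it)=\widetilde F\bigl(s,(t\beta_k)_k\bigr)$ and $G(s)=\widetilde F\bigl(s,(\theta_k)_k\bigr)$.

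Next, I would invoke Kronecker's theorem: since the $\beta_k$ are $\mathbb{Q}$-linearly independent, for every finite $K$ and every $\epsilon>0$ there exist arbitrarily large $t$ with $\|t\beta_k-\theta_k\|_{\mathbb{R}/2\pi\mathbb{Z}}<\epsilon$ for $k=1,\ldots,K$. Combining this with the absolute convergence of $\widetilde F$, a standard truncation argument (splitting the series at frequency $\lambda_n$ large enough that the tail is uniformly small on the chosen compact set) shows that for any compact $\mathcal{K}\subset\{\sigma_1<\operatorname{Re} s<\sigma_2\}$ and any $\eta>0$ one can find arbitrarily large $t$ with $|F(s+it)-G(s)|<\eta$ for every $s\in\mathcal{K}$. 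By symmetry, $G$ approximates $F$ in the same sense.

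Finally, to upgrade uniform approximation to equality of value sets, I would apply Hurwitz's theorem. Let $w=G(s_0)$ with $s_0$ in the open strip, and assume $G-w$ is not identically zero (a nontrivial general Dirichlet series cannot be constant on its half-plane of absolute convergence). Pick a small closed disk $D$ around $s_0$ contained in the strip on which $G-w$ has $s_0$ as its unique zero and does not vanish on $\partial D$. Choose $t_n\to\infty$ with $F(\,\cdot\,+it_n)\to G$ uniformly on $D$; then by Hurwitz's theorem, for all sufficiently large $n$ the function $F(s+it_n)-w$ has a zero in $D$, producing a point in the strip where $F$ takes the value $w$. The reverse inclusion is symmetric. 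The main obstacle I anticipate is the uniform approximation step, where one must simultaneously control the infinite tail of $\widetilde F$ while keeping the Kronecker approximation sharp on the initial segment of frequencies; this is the step that genuinely uses the absolute convergence hypothesis.
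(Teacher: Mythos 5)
The paper does not prove this statement---it is quoted verbatim from Apostol \cite[Ch.~8]{apostol}---and your argument is essentially the classical proof given there: the Bohr lift relative to a $\Q$-basis of the exponents, Kronecker's theorem to approximate $G$ uniformly on compacta by vertical translates of $F$, and Hurwitz/Rouch\'e to convert uniform approximation into equality of value sets; this is correct and the same approach. The one detail to tighten is the Kronecker step: because the $q_{n,k}$ are rational rather than integral, $e^{i q_{n,k}x}$ is not $2\pi$-periodic in $x$, so you must approximate $t\beta_k$ modulo $2\pi d$ for a common denominator $d$ of the finitely many relevant $q_{n,k}$ (equivalently, apply Kronecker to the independent numbers $\beta_k/d$), not merely modulo $2\pi$.
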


In \cite{davenport1}, Davenport and Heilbronn explicitly find a Dirichlet series which is equivalent to $\zeta(s,a)$ and has a zero for $\sigma>1$, then by Bohr's equivalence theorem also $\zeta(s,a)$ has a zero for $\sigma>1$. Moreover, if we denote $s_0=\sigma_0+it_0$ this zero, by almost periodicity and Rouch\'{e}'s theorem, it is easy to verify that for any $\eps>0$
$$\#\{s=\sigma+it\mid \zeta(s,a)=0,\: \sigma_0-\eps<\sigma<\sigma_0+\eps,\: A<t<A+T\}\gg T$$
for any sufficiently large $T$, and all implied constants are independent of $A\in\R$.

For example, when $a=l/k$ is rational, we have to deal with the ordinary Dirichlet series $k^{-s}\zeta(s,l/k)$, for which we have the following statement.
\begin{theorem}[{\cite[Theorem 8.12]{apostol}}]\label{theorem:apostol}
Two ordinary Dirichlet series $F(s)=\sum_{n=1}^\infty a(n)n^{-s}$ and $G(s)=\sum_{n=1}^\infty b(n)n^{-s}$ are equivalent if and only if there exists a completely multiplicative function $\varphi(n)$ such that
\begin{enumerate}[a),leftmargin=.35in]
\item $|\varphi(p)|=1$ if $p$ is a prime dividing $n$ and $a(n)\neq 0$;
\item $b(n)=a(n)\varphi(n)$.
\end{enumerate}
\end{theorem}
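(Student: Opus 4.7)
The plan is to reduce the equivalence of ordinary Dirichlet series to the arithmetic of prime factorisation, exploiting the fact that $\lambda_n=\log n$ decomposes uniquely as $\log n=\sum_p v_p(n)\log p$. Recall from \cite[\S8.7]{apostol} that two general Dirichlet series with common exponent sequence $\{\lambda_n\}$ are equivalent precisely when there exists a character $\chi$ of the additive subgroup $V\subset\mathbb{R}$ generated by $\{\lambda_n\}$ (i.e.\ a homomorphism $V\to\{|z|=1\}$) such that $b(n)=a(n)\chi(\lambda_n)$ for every $n$. For ordinary series the $\mathbb{Q}$-linear independence of $\{\log p\}_p$, together with unique factorisation, identifies $V$ with the free abelian group $\bigoplus_p \mathbb{Z}\log p$, which is what makes the translation into a multiplicative object possible.

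For the ``only if'' direction, start from such a character $\chi$ and set $\varphi(n):=\chi(\log n)$. The identity $\log(mn)=\log m+\log n$ and the homomorphism property of $\chi$ yield $\varphi(mn)=\varphi(m)\varphi(n)$ for all $m,n\geq 1$, so $\varphi$ is completely multiplicative; $|\varphi(p)|=1$ is automatic since $\chi$ takes values in the unit circle. Condition (b) is then the hypothesis $b(n)=a(n)\chi(\log n)$ rewritten, and (a) follows a fortiori.

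For the ``if'' direction one must cope with an asymmetry in the statement: condition (a) only forces $|\varphi(p)|=1$ on primes $p$ dividing some $n$ with $a(n)\neq 0$, while on the remaining primes $\varphi(p)$ could a priori be arbitrary. However, any such ``inert'' prime $p$ contributes only to integers $n$ with $a(n)=0$, for which also $b(n)=0$ by (b); hence one may freely redefine $\varphi(p)$ on these primes to any unimodular value without altering either $F$ or $G$. Once $|\varphi(p)|=1$ for every prime, write $\varphi(p)=e^{iy_p}$ and extend linearly to $\chi\bigl(\sum_p r_p\log p\bigr)=\exp\bigl(i\sum_p r_p y_p\bigr)$, a well-defined character of $V$ by the linear independence of the $\log p$. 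Complete multiplicativity of $\varphi$ gives $\chi(\log n)=\varphi(n)$, so $b(n)=a(n)\chi(\log n)$ and the two series are equivalent.

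The only delicate point is the redefinition of $\varphi$ on primes outside the support in the ``if'' direction; everything else is the standard dictionary between multiplicative characters on $(\mathbb{N},\times)$ and additive characters on the $\mathbb{Q}$-vector space $\bigoplus_p \mathbb{Q}\log p$, which is immediate once the prime basis is fixed.
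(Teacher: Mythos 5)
The paper does not prove this statement; it is quoted verbatim from Apostol \cite[Theorem 8.12]{apostol} as a known result. Your argument is correct and is essentially Apostol's own proof: the dictionary between unimodular completely multiplicative functions and characters of the free abelian group on the basis $\{\log p\}$ of the exponent set $\{\log n\}$, with the one genuine subtlety --- that condition (a) leaves $\varphi$ unconstrained at primes dividing no $n$ with $a(n)\neq 0$, so it must be renormalized to be unimodular there --- correctly identified and handled.
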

\begin{remark}\label{remark:bohr_almost_per} Let be given a Dirichlet series $F(s)=\sum_{n=1}^\infty a(n)n^{-s}$ absolutely convergent for $\sigma>1$, and a completely multiplicative function $\varphi(n)$ with $|\varphi(n)|=1$ for every $n$; then, by Theorem \ref{theorem:apostol} and Bohr's equivalence theorem, for any $1\leq\sigma_1<\sigma_2$, the Dirichlet series
$$F^\varphi(s)=\sum_{n=1}^\infty \frac{a(n)\varphi(n)}{n^s}$$
takes the same set of values of $F(s)$ in $\sigma_1<\sigma<\sigma_2$. In particular, if $F^\varphi(s)$ has a zero in this vertical strip, so does $F(s)$. Moreover, as before, by Rouch\'{e}'s theorem and almost periodicity, in such a case one has
$$\#\{s=\sigma+it	\in\C\mid F(s)=0,\: \sigma_1<\sigma<\sigma_2,\: A<t<A+T\}\gg T$$
for any sufficiently large $T$, with all implied constants independent of $A\in\R$.
\end{remark}
When $a=l/k$ is rational, Davenport and Heilbronn \cite{davenport1} take $\varphi(n)$ defined at the primes $p$ as $\varphi(p)=i$ if the quadratic character $\lsym{p}{k}=-1$ and $\varphi(p)=1$ otherwise, and show that $\zeta^\varphi(s,l/k)$ has a zero for $\sigma>1$ (see \cite[Lemma 2]{davenport1}), then by Remark \ref{remark:bohr_almost_per}, it follows that $\zeta(s,l/k)$ has infinitely many zeros for $\sigma>1$.

Following Davenport and Heilbronn's method, Conrey and Ghosh \cite{conreyghosh} show\-ed that also the $L$-function associated to the square of Ramanujan's $\Delta$ cusp form has infinitely many zeros within its region of absolute convergence. Note that this $L$-function may be written as a linear combination of two $L$-functions associated to distinct degree-24 eigenforms.  

Recently, Kaczorowski and Kulas \cite{kulas} showed that, given $N\geq2$ pairwise nonequivalent Dirichlet characters $\chi_1,\ldots,\chi_N$ and $P_1,\ldots,P_N$ non-zero Dirichlet polynomials, the Dirichlet series
$$F(s)=\sum_{j=1}^N P_j(s)L(s,\chi_j)$$
has infinitely many zeros for $\frac{1}{2}<\sigma<1$, by using a strong joint universality property of Dirichlet $L$-functions.

Inspired by this work, Saias and Weingartner \cite{saias} proved that the same holds also for $\sigma>1$, by proving, through Brower fixed point theorem, a sort of ``weak joint universality property'' of Dirichlet $L$-functions for $\sigma>1$, i.e. 
\begin{quote}
given $R>1$ there exists $\eta>0$ such that for any $1<\sigma\leq 1+\eta$ and any $(z_1,\ldots,z_N)$, with $R^{-1}\leq |z_j|\leq R$ for all $j$, there exists $\varphi(n)$, completely multiplicative with $|\varphi(n)|=1$, such that $L^\varphi(\sigma,\chi_j)=z_j$, $j=1,\ldots, N$.
\end{quote}
In fact, writing $\varphi(p)=p^{-it_p}$, for some $t_p\in\R$, Brower fixed point theorem allows Saias and Weingartner to pass from trying to solve the Euler product system with $N$ equations and infinitely many variables
$$L^\varphi(\sigma,\chi_j)=\prod_p\left(1-\frac{\chi_j(p)}{p^{\sigma+it_p}}\right)^{-1}=z_j,\quad j=1,\ldots,N,$$
to the ``linear'' system with $N$ equations and infinitely many variables
\begin{equation}\label{eq:linear_system_Dirichlet}
\sum_p\frac{\chi_j(p)}{p^{\sigma+it_p}}=z_j,\quad j=1,\ldots,N,
\end{equation}
with the additional condition that $t_p$ must be continuous in the variables $(z_1,\ldots,z_N)$. Partitioning the primes into a finite number of (residue) classes, the system \eqref{eq:linear_system_Dirichlet} is reduced so to have a finite number of variables of modulus 1, which can be solved geometrically.\\
It is worth noting that this allows to generalize Davenport and Heilbronn method. Indeed, for any Dirichlet series $F(s)$ of the type studied by Kaczorowski and Kulas \cite{kulas}, Saias and Weingartner always find $\varphi(n)$ completely multiplicative with $|\varphi(n)|=1$ such that $F^\varphi(s)$ has a zero for $\sigma>1$. Then, as explained above, $F(s)$ has infinitely many zeros for $\sigma>1$.

Very recently, Booker and Thorne \cite{booker} refined Saias and Weingartner's technique and showed that all $L$-functions coming from unitary cuspidal automorphic representations of $\mathrm{GL}_r(\mathbb{A}_\Q)$, $r\geq 1$, share the same property, conditionally to the generalized Ramanujan conjecture at every finite place. Actually, the Ramanujan conjecture may be replaced with milder hypothesis, so that Booker and Thorne's result is unconditional for $r\leq 2$ \cite[Remark (3)]{booker}. Moreover, by cleverly using Hilbert's Nullstellensatz, they deduce from this property that not just linear, but also non-linear combinations of these $L$-functions, with Dirichlet polynomials as coefficients, have infinitely many zeros for $\sigma>1$, provided that there are at least two distinct non-zero terms.\\
As a downside, Booker and Thorne's proof still relies on the use of residue classes, limiting the sets of functions for which the proof of such a property is valid to those that are closed with respect to twists by Dirichlet characters. Although it is conjectured that this property holds for every $L$-function, for the moment being it may be of some interest to remove such an assumption. Moreover, for degree-two $L$-functions it is well known, by Weil's converse theorem, that the $L$-functions closed with respect to twists by Dirichlet characters are those coming from automorphic forms, provided that the functional equation of the twisted $L$-functions is of a given type. Hence a result which would not depend on such an assumption would have, in principle, a wider range of application. However, it must be said that Booker and Thorne \cite[Remark (4)]{booker} claim that, at the expense of making the proof more complicated, the use of residue classes could be avoided and that a similar result could be proven for an axiomatically-defined class of $L$-functions, such as the Selberg class.

In this paper we want to refine this technique by removing the use of residue classes, so that we can operate in a more general setting. Hence, let $\mathcal{E}$ be a class of complex functions $F(s)$ such that
\begin{enumerate}[(E1),leftmargin=.42in]
\vspace{-\topsep}
\item\label{hp:DS} $\displaystyle F(s)=\sum_{n=1}^\infty \frac{a_F(n)}{n^s}$, absolutely convergent for $\sigma>1$;
\item\label{hp:EP} $\displaystyle \log F(s)=\sum_{p} \log F_p(s) = \sum_p \sum_{k=1}^\infty\frac{b_F(p^k)}{p^{ks}}$, absolutely convergent for $\sigma>1$;
\item\label{hp:RC} there exists a constant $K_F$ such that $|a_F(p)|\leq K_F$ for every prime $p$;
\item\label{hp:HH} $\displaystyle \sum_p \sum_{k=2}^\infty\frac{|b_F(p^k)|}{p^{k}}<\infty$;
\item\label{hp:SC} for any pair of functions $F,G\in\mathcal{E}$ there exists $m_{F,G}\in\C$ such that

$ $\vspace{-10pt}
$$\sum_{p\leq x} \frac{a_F(p)\conj{a_G(p)}}{p} = (m_{F,G}+o(1))\log\log x,\quad x\rightarrow \infty,$$
with $m_{F,F}>0$.
\end{enumerate}
\begin{remark}\label{remark:non_zero}
If $F\in\mathcal{E}$, then $F(s)\neq 0$ for $\sigma>1$, by \ref{hp:EP}.
\end{remark}
\begin{definition}
We say that two functions $F,G\in\mathcal{E}$ are \emph{orthogonal} if $m_{F,G}=0$.
\end{definition}

In this setting we are able to prove the following ``weak joint universality property'', whose proof will be presented in Section \ref{section:representation}.
\begin{proposition}\label{proposition:main}
Let be given an integer $N\geq 1$, distinct functions $F_j(s)=\sum_{n}a_j(n)n^{-s}\in\mathcal{E}$, and real numbers $R,y\geq 1$. If $F_1, \ldots, F_N$ are pairwise orthogonal, then there exists $\eta>0$ such that for every $\sigma\in(1,1+\eta]$ we have
\begin{spliteq*}
&\left\{\left(\prod_{p>y}F_{1,p}(\sigma+it_p),\ldots,\prod_{p>y}F_{N,p}(\sigma+it_p)\right)  \mid t_p\in\R\right\}\\
&\qquad\qquad\qquad\qquad\supseteq\left\{(z_1,\ldots,z_N)\in\C^N\mid \frac{1}{R}\leq |z_j|\leq R\right\}.
\end{spliteq*}
\end{proposition}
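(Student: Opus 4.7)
I would begin by linearising the problem. Using \ref{hp:EP} together with the identity $b_F(p)=a_F(p)$ (forced by comparing $\log F_p(s)=\sum_k b_F(p^k)/p^{ks}$ with the Euler expansion of $F_p(s)$), split
\[
\log F_{j,p}(\sigma+it_p) \;=\; \frac{a_j(p)}{p^{\sigma+it_p}} \;+\; g_{j,p}(\sigma+it_p), \qquad g_{j,p}(s)=\sum_{k\geq 2}\frac{b_j(p^k)}{p^{ks}}.
\]
Hypothesis \ref{hp:HH} makes $\sum_{p>y}|g_{j,p}(\sigma+it)|\leq C$ uniformly in $\sigma\geq1$ and in the vector $(t_p)$, so, up to a continuous perturbation absorbed by a small-ball Brouwer/continuity argument, it suffices to show that the image of the ``linear'' map
\[
\mathbf L(u)_j \;=\; \sum_{p>y}\frac{a_j(p)}{p^\sigma}\,u_p,\qquad u=(u_p)\in\prod_{p>y}S^1,
\]
contains a ball $\bar B(0,M)\subset\C^N$ of radius $M$ depending on $R$, $N$ and the tail constant $C$, for every $\sigma\in(1,1+\eta]$.

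I would first handle the \emph{convex relaxation} $K_\sigma=\mathbf L\bigl(\bar D^{\{p>y\}}\bigr)$, which is balanced convex with support function $h_\sigma(\xi)=\sum_{p>y}|\xi\cdot\mathbf a(p)|/p^\sigma$, where $\mathbf a(p)=(a_j(p))_j$. From \ref{hp:RC} we have $\|\mathbf a(p)\|\leq\sqrt N K$, hence the elementary bound $|z|\geq |z|^2/(\sqrt N K\|\xi\|)$ with $z=\xi\cdot\mathbf a(p)$ gives
\[
h_\sigma(\xi)\;\gg\;\frac{1}{\|\xi\|}\sum_{i,j}\xi_i\overline{\xi_j}\sum_{p>y}\frac{a_i(p)\overline{a_j(p)}}{p^\sigma}.
\]
Abel summation applied to \ref{hp:SC} yields $\sum_{p>y}a_i\overline{a_j}/p^\sigma\sim m_{i,j}\log\tfrac{1}{\sigma-1}$ as $\sigma\to 1^+$. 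Pairwise orthogonality forces $m_{i,j}=\delta_{i,j}m_{j,j}$ with $m_{j,j}>0$, so the inner double sum is a positive multiple of $\|\xi\|^2\log\tfrac{1}{\sigma-1}$, and $h_\sigma(\xi)\gg_N\|\xi\|\log\tfrac{1}{\sigma-1}$. Choosing $\eta$ small enough makes $K_\sigma\supseteq\bar B(0,2M)$ for $\sigma\in(1,1+\eta]$.

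To upgrade from $K_\sigma$ to the unit-modulus image $C_\sigma=\mathbf L\bigl((S^1)^{\{p>y\}}\bigr)$ I would use Brouwer's fixed-point theorem in the spirit of Saias--Weingartner. A natural ansatz is to parametrise the angles as $u_p(\lambda)=\Phi\bigl(\sum_k\lambda_k\overline{a_k(p)}\bigr)$ with $\lambda\in\bar B(0,\rho)\subset\C^N$ and $\Phi(z)=z/|z|$; by the computation above, the orthogonality relations make $\mathbf L(u(\lambda))$ approximately a positive diagonal linear map in $\lambda$ scaled by $\log\tfrac{1}{\sigma-1}$. After a suitable rescaling of $\lambda$, the Picard-type map $T_v(\lambda)=\lambda-\alpha\bigl(\mathbf L(u(\lambda))-v\bigr)$ sends $\bar B(0,\rho)$ to itself for every $v\in\bar B(0,M)$ and $\alpha>0$ small enough, so Brouwer delivers a fixed point, equivalently a solution of $\mathbf L(u)=v$.

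\textbf{Main obstacle.} The hard step will be making this last paragraph rigorous. The map $\Phi$ is discontinuous at $0$, so one must either argue that $\sum_k\lambda_k\overline{a_k(p)}$ vanishes on a negligible set of primes uniformly in $\lambda$, or replace $\Phi$ by a smoothed version and track the error. A second subtle point is that, since \ref{hp:SC} is only a logarithmic-density asymptotic, for orthogonal pairs it merely gives $\sum_{p\leq x}a_i(p)\overline{a_j(p)}/p=o(\log\log x)$; Abel summation then shows $\sum_{p>y}a_i\overline{a_j}/p^\sigma=o(\log\tfrac{1}{\sigma-1})$ as $\sigma\to 1^+$, but this estimate must be uniform enough not to interfere with the diagonal main term. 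Combining these two issues while keeping $\eta$ dependent only on $R$, $y$, $N$ and the constants of \ref{hp:RC}--\ref{hp:SC} is the technical crux of the argument.
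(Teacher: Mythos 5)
Your reduction to the linear system and your support-function computation for the \emph{convex relaxation} are essentially sound (the latter is in substance the same Cauchy--Schwarz/orthogonality estimate that underlies Corollary \ref{corollary:positive_density} and the determinant bound in Proposition \ref{proposition:linear_system}, and the off-diagonal $o(\log\frac{1}{\sigma-1})$ terms are indeed harmless since there are finitely many pairs). But the step you yourself flag as the crux --- passing from the polydisk image $K_\sigma$ to the torus image $C_\sigma$ --- is genuinely missing, and the ansatz you propose cannot deliver it. With the exact $\Phi(z)=z/|z|$, writing $w_p(\lambda)=\sum_k\lambda_k\overline{a_k(p)}$, the map $\lambda\mapsto\mathbf L(u(\lambda))$ satisfies $\mathbf L(u(t\lambda))=\mathbf L(u(\lambda))$ for all $t>0$, so it factors through the $(2N-1)$-sphere and, by $S^1$-equivariance, through a circle bundle over $\mathbb{CP}^{N-1}$; nothing in your sketch shows that such an image can contain an open subset of $\mathbb{C}^N$, and in any case $T_v$ is discontinuous on the union of the hyperplanes $\{w_p=0\}$, so Brouwer does not apply. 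If instead you smooth $\Phi$, the fixed point gives coefficients $u_p=\Phi_{\varepsilon}(w_p(\lambda))$ with $|u_p|<1$ whenever $|w_p(\lambda)|$ falls below the smoothing threshold, and such $u_p$ cannot be written as $p^{-it_p}$ with $t_p\in\mathbb{R}$; ``tracking the error'' is circular, because compensating for the defective primes by readjusting the others is exactly the covering problem you are trying to solve. The unit-modulus constraint is the entire content of the proposition, not a technicality.

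The paper resolves it with a mechanism your sketch lacks: the primes $p>y$ are partitioned into $2mN$ blocks $S_{ik}$ of prescribed Dirichlet density (Lemma \ref{lemma:subset_given_density} applied inside the positive-density sets $Q_u$ of Corollary \ref{corollary:positive_density}), producing matrices $g_1,\dots,g_{2m}$ lying in a fixed compact subset of $\mathrm{GL}_N(\mathbb{C})$; Booker--Thorne's Proposition \ref{proposition:unitary_combination_of_matrices} then supplies continuous functions $f_i$ valued in the \emph{torus} $T_N$ (not the polydisk) with $\sum_i g_if_i(z)=z$, and the leftover primes are handled by choosing signs via Rademacher--Paley--Zygmund (Corollary \ref{corollary:uniform_bound}) together with a second application of the same proposition. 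Some substitute for this ``many independent blocks, each realizing a full torus'' structure is indispensable; without it your argument only shows that the polydisk image is large, which is strictly weaker than the statement.
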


This result is actually about the value distribution of $N$-uples of logarithms of Euler products, as it is clear from the proof. On this subject we mention that Nakamura and Pa\'{n}kowski \cite{nakamurapankowski} has obtained a similar result with similar hypotheses in the case of the logarithm of one Euler product, and its derivatives. We thank the referee for pointing out this article by Nakamura and Pa\'{n}kowski \cite{nakamurapankowski}, which was not yet available when this paper was written.

Let $\mathcal{P}$ be the set of primes of $\Z$. For $Q\subseteq\mathcal{P}$, we write $\langle Q\rangle=\{n\in\N\mid$ every prime factor of $n$ is in $Q\}$, then with $\mathcal{F}$ we denote the ring of \emph{$p$-finite Dirichlet series} (see \cite{kmp1}) absolutely convergent for $\sigma\geq 1$, i.e.
$$\mathcal{F}=\left\{\sum_{n\in\langle Q\rangle}\frac{a(n)}{n^s}\hbox{ abs. conv. for }\sigma\geq 1\mid Q\subseteq\mathcal{P}\hbox{ has finitely many elements}\right\},$$
which clearly contains all Dirichlet polynomials. Then, by adapting Booker and Thorne's proof of Theorem 1.2 of \cite{booker}, through Proposition \ref{proposition:main} one obtains the following result.

\begin{theorem}\label{theorem:main}
Fix an integer $N\geq 1$. For $j=1,\ldots,N$, let be given distinct functions $F_j(s)=\sum_{n}a_j(n)n^{-s}\in\mathcal{E}$. Suppose that $F_1,\ldots,F_N$ are pairwise orthogonal, then any polynomial $P\in \mathcal{F}[X_1,\ldots,X_N]$ either is a monomial or $P(F_1(s),\ldots,F_N(s))$ has infinitely many zeros for $\re(s)>1$. In the latter case there exists $\eta>0$ such that for any $1<\sigma_1<\sigma_2\leq 1+\eta$, we have
$$\#\{s=\sigma+it\mid P(F_1(s),\ldots,F_N(s))=0,\: \sigma_1<\sigma<\sigma_2,\: A<t<A+T\}\gg T$$
for any sufficiently large $T$, and all implied constants are independent of $A\in\R$.
\end{theorem}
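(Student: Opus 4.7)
\medskip
\noindent\textbf{Proof proposal.}
The plan is to adapt Booker and Thorne's proof of their Theorem~1.2 in \cite{booker}, replacing their appeal to twists by Dirichlet characters with our Proposition~\ref{proposition:main}. Write $P(X_1,\ldots,X_N)=\sum_\alpha c_\alpha(s)X^\alpha$ with $c_\alpha\in\mathcal{F}$; the non-monomial hypothesis implies that at least two of the $c_\alpha$'s are not identically zero. Pick $y$ larger than every prime occurring in the support of any $c_\alpha$, and consider the family of completely multiplicative $\varphi$ with $|\varphi|=1$, $\varphi(p)=1$ for $p\leq y$, and $\varphi(p)=p^{-it_p}$ for $p>y$. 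Twisting by such $\varphi$ commutes with Dirichlet convolution and fixes every $c_\alpha$, so setting $Q(s):=P(F_1(s),\ldots,F_N(s))$ and $A_j(s):=\prod_{p\leq y}F_{j,p}(s)$ one has
$$Q^\varphi(s)=P\bigl(s;F_1^\varphi(s),\ldots,F_N^\varphi(s)\bigr),\qquad F_j^\varphi(s)=A_j(s)\prod_{p>y}F_{j,p}(s+it_p).$$

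By Remark~\ref{remark:bohr_almost_per} it is enough to produce, for every $\sigma_0$ in a dense subset of some interval $(1,1+\eta]$, an admissible $\varphi$ with $Q^\varphi(\sigma_0)=0$: Bohr equivalence then forces $Q$ itself to vanish at some point of any sub-strip of $(1,1+\eta]$ containing $\sigma_0$, and the almost-periodicity argument of Remark~\ref{remark:bohr_almost_per} upgrades this single zero to the required $\gg T$ count. Introducing $\tilde c_\alpha(s):=c_\alpha(s)\prod_jA_j(s)^{\alpha_j}$ and
$$\tilde P(s;z):=\sum_\alpha \tilde c_\alpha(s)\,z^\alpha,$$
Proposition~\ref{proposition:main} reduces this task to finding $z^*\in(\C^*)^N$ with $\tilde P(\sigma_0;z^*)=0$ and $R^{-1}\leq|z^*_j|\leq R$, for some $R\geq 1$ whose corresponding $\eta_R$ from Proposition~\ref{proposition:main} is at least $\sigma_0-1$.

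Existence of such $z^*$ is supplied by Hilbert's Nullstellensatz, exactly as in \cite{booker}: a polynomial in $\C[z_1,\ldots,z_N]$ non-vanishing on $(\C^*)^N$ is a unit in the Laurent-polynomial ring $\C[z_1^{\pm1},\ldots,z_N^{\pm1}]$, hence a monomial; contrapositively, any non-monomial polynomial has a zero in $(\C^*)^N$. Since the non-zero $c_\alpha$'s vanish only on a discrete subset of $\{\sigma>1\}$ and each $A_j$ is non-vanishing there, the polynomial $\tilde P(\sigma_0;\cdot)$ is non-monomial for every $\sigma_0$ in the complement of the discrete exceptional set $\Sigma:=\{\sigma>1:c_\alpha(\sigma)=0\text{ for some non-zero }c_\alpha\}$, so a zero $z^*(\sigma_0)\in(\C^*)^N$ exists on the dense complement.

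The main obstacle is extracting a \emph{uniform} $R$: the Nullstellensatz is non-constructive, and a priori $|z^*_j(\sigma_0)|$ may drift to $0$ or $\infty$ as $\sigma_0$ approaches $1$ or a point of $\Sigma$ (for example when some $c_\alpha(1)=0$). The natural remedy is that, on any compact subinterval of $(1,1+\eta]\setminus\Sigma$, all $\tilde c_\alpha$ are bounded and bounded away from $0$, so that a continuity/compactness argument confines $z^*(\sigma_0)$ to a common annulus $[R^{-1},R]$; choosing $\eta$ sufficiently small that the corresponding $\eta_R$ from Proposition~\ref{proposition:main} exceeds $\eta$ on such a covering of $(1,1+\eta]\setminus\Sigma$, the construction produces the required admissible $\varphi$ for a dense set of $\sigma_0\in(1,1+\eta]$, and Remark~\ref{remark:bohr_almost_per} concludes the proof.
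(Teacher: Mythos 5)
The overall architecture you adopt — reduce via Remark~\ref{remark:bohr_almost_per} to producing one admissible $\varphi$ with $Q^\varphi(\sigma_0)=0$, combine the Nullstellensatz (Lemma~\ref{lemma:zero_pol}) with Proposition~\ref{proposition:main}, and isolate the primes $p\le y$ carrying the $\mathcal F$-coefficients — is exactly the paper's. The departure is your choice of $\varphi(p)=1$ for $p\le y$, which forces you to evaluate the coefficients $\tilde c_\alpha$ on the \emph{real} ray $\sigma_0>1$, and this is where the proposal has a genuine gap.

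You correctly identify the danger (the zero $z^*(\sigma_0)$ may drift as $\sigma_0\to1^+$ or as $\sigma_0$ approaches $\Sigma$), but the proposed remedy is circular. To invoke Proposition~\ref{proposition:main} you must first fix $R$; the $\eta_R$ you obtain depends on $R$; yet the uniform $R$ you want is only available on compact subintervals of $(1,1+\eta]\setminus\Sigma$, and whether any such subinterval lies inside $(1,1+\eta_R]$ is precisely the issue. The scenario that is not ruled out: if some nonzero $\tilde c_\alpha$ has $\tilde c_\alpha(1)=0$, then $\tilde P(1;\cdot)$ is a monomial (or has lower degree), Lemma~\ref{lemma:close_zero} cannot be anchored at $\sigma_0=1$ (it only permits perturbing \emph{non-zero} coefficients), the required $R(\sigma_0)$ blows up as $\sigma_0\to1^+$, $\eta_{R(\sigma_0)}\to0$, and you never land a $\sigma_0$ inside $(1,1+\eta_{R(\sigma_0)}]$. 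Your sentence ``choosing $\eta$ sufficiently small that the corresponding $\eta_R$ exceeds $\eta$ on such a covering'' presupposes a fixed $R$ valid on all of $(1,1+\eta]$, which is exactly what might fail.

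The paper breaks this circularity by allowing a shift on the line $\sigma=1$: it applies a Phragm\'en--Lindel\"of/maximum-modulus argument to $\tilde D_1(s)\cdots\tilde D_M(s)$ to find $t_0\in\R$ with \emph{all} coefficients $\tilde D_i(1+it_0)\ne0$, applies the Nullstellensatz to $Q(\cdot\,;1+it_0)$ to get a fixed zero $(x_1,\ldots,x_N)\in(\C^*)^N$, fixes $R$ once and for all from the moduli of these $x_j$, and only then uses Lemma~\ref{lemma:close_zero} (legitimately, since all relevant coefficients are nonzero at $1+it_0$) and Proposition~\ref{proposition:main} to propagate this zero along $\sigma+it_0$ for $\sigma\in(1,1+\eta]$. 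Correspondingly one must set $\varphi(p)=p^{-it_0}$ for $p\le y$, not $\varphi(p)=1$; this is the step your proof is missing and cannot dispense with.
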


In Section \ref{section:proof_main} we will give a proof of this theorem which is slightly different from a simple adaptation of Booker and Thorne's one \cite[\S4]{booker} in order to clear the underlying structure as presented in this introduction.

\begin{remark}
Note that the assumption $F_1(s), \ldots, F_N(s)$ pairwise orthogonal is necessary for Theorem \ref{theorem:main} to hold for \emph{any} polynomial $P$. In fact, take for example two orthogonal elements $F,G\in\mathcal{E}$, and consider $N=3$, $F_1=F^2$, $F_2=FG$, $F_3=G^2$, and $P=2X_2^3-X_1X_2X_3$. Then $P(F_1(s),F_2(s),F_3(s);s)=(F(s)G(s))^3$ which never vanishes for $\sigma>1$ by Remark \ref{remark:non_zero}, although $P$ is not a monomial.
\end{remark}

As a consequence of Theorem \ref{theorem:main} we have a partial result toward the following conjecture (see Bombieri and Ghosh \cite[p. 230]{bombierighosh})
\begin{quote}
The real parts of the zeros of a linear combination of two or more $L$-functions are dense in the interval $(1,\sigma^*)$, where $\sigma^*>1$ is the least upper bound of the real parts of such zeros. 
\end{quote}
Indeed we have the following result.
\begin{corollary}
Let be given an integer $N\geq 2$, pairwise orthogonal functions $F_1,\ldots,F_N\in\mathcal{E}$, and non-zero constants $c_1,\ldots,c_N\in\C$. Then there exists $\tilde{\sigma}$ such that $\{\sigma\in(1,\tilde{\sigma}]\mid \exists t\in\R$ s.t. $\sum_j c_j F_j(\sigma+it)=0\}$ is dense in $(1,\tilde{\sigma}]$.
\end{corollary}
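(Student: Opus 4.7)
The plan is to deduce the corollary as an immediate consequence of Theorem \ref{theorem:main} applied to the linear polynomial $P(X_1,\ldots,X_N) = c_1 X_1 + \cdots + c_N X_N$. First I would verify that $P$ fits the framework of Theorem \ref{theorem:main}: the constants $c_j \in \C$ are trivially $p$-finite Dirichlet series absolutely convergent for $\sigma \geq 1$ (take the empty set $Q = \emptyset$), so $P \in \mathcal{F}[X_1,\ldots,X_N]$; and since $N \geq 2$ and all $c_j$ are non-zero, $P$ has at least two distinct non-zero terms and is therefore not a monomial.

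Next I would invoke Theorem \ref{theorem:main} with this $P$. The pairwise orthogonality of $F_1,\ldots,F_N$ is assumed, so the theorem provides some $\eta > 0$ such that, for every $1 < \sigma_1 < \sigma_2 \leq 1+\eta$ and every sufficiently large $T$,
$$\#\Bigl\{s=\sigma+it\,\Bigm|\, \sum_{j=1}^N c_j F_j(s) = 0,\; \sigma_1 < \sigma < \sigma_2,\; 0 < t < T\Bigr\} \gg T.$$
I would then set $\tilde{\sigma} := 1+\eta$. Given any open subinterval $(\alpha,\beta) \subseteq (1,\tilde{\sigma}]$, applying the above bound with $\sigma_1 = \alpha$ and $\sigma_2 = \min(\beta,\tilde{\sigma})$ yields (for $T$ large) at least one zero $s_0 = \sigma_0 + it_0$ of $\sum_j c_j F_j(s)$ with $\sigma_0 \in (\alpha,\beta)$. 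Hence $\sigma_0$ lies in the set of real parts described in the statement, and that set meets every open subinterval of $(1,\tilde{\sigma}]$, which is exactly density.

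There is no genuine obstacle specific to the corollary; the substantive analytic and algebraic content has already been packaged into Theorem \ref{theorem:main}, whose quantitative counting conclusion is in fact strictly stronger than the density claim. The only mild subtlety is the verification that linear combinations with scalar coefficients fall within the class $\mathcal{F}[X_1,\ldots,X_N]$, which is handled by the trivial observation above.
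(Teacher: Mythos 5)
Your proposal is correct and follows essentially the same approach as the paper: apply Theorem \ref{theorem:main} to the linear polynomial $P = \sum_j c_j X_j$, set $\tilde{\sigma}=1+\eta$, and use the quantitative zero-counting estimate to extract a zero whose real part lies in any prescribed subinterval. The extra details you supply (constants lie in $\mathcal{F}$ via $Q=\emptyset$; non-monomiality from $N\geq 2$ and $c_j\neq 0$) are implicit in the paper's one-line proof and are welcome clarifications rather than a different route.
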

\begin{proof} Apply Theorem \ref{theorem:main} to $P=\sum_{j=1}^N c_j X_j$, which clearly is not a monomial. Then, setting $\tilde{\sigma}=1+\eta$, the statement follows by the second part of Theorem \ref{theorem:main} taking, for any $\sigma\in (1,\tilde{\sigma}]$ and any $\eps>0$, $\sigma_1=\sigma-\eps$ and $\sigma_2=\sigma+\eps$.
\end{proof}

\subsection{Applications}

Here we show that Theorem \ref{theorem:main} may be applied in many cases.\vspace{0.2cm}

\noindent\textbf{Artin $L$-functions.} For an introduction on Artin's $L$-functions we refer to Chapter V of Neukirch \cite{neukirch}. Let $L(s,\rho,L/K)$ be the Artin $L$-function associated to the Galois extension of number fields $L/K$ with Galois group $G=\mathrm{Gal}(L/K)$, and to the representation $\rho$ of $G$. Note that \ref{hp:DS}, \ref{hp:EP}, \ref{hp:RC} and \ref{hp:HH} hold as an immediate result following from the definition, while \ref{hp:SC} follows from Chebotarev's Density Theorem (see \cite[Theorem 6.4]{neukirch}). In particular, by the orthogonality of characters, if $\rho_1$ and $\rho_2$ are both \emph{irreducible}, the corresponding $L$-functions are orthogonal (see, for example, \cite[Fact 3]{kmp2}).

\begin{corollary}\label{corollary:artin_L-functions}
Fix an integer $N\geq 1$. For $j=1,\ldots,N$, let be given Galois extensions $K_j$ over $\Q$ with Galois group $G_j$, and representations $(\rho_j,V_j)$ of $G_j$. Denote with $G$ the Galois group of $K_1\cdots K_N$ and suppose that the representations are all distinct and irreducible representations of $G$, then, if $P\in \mathcal{F}[X_1,\ldots,X_N]$ is not a monomial, $P(L(s,\rho_1,K_1/\Q),\ldots,L(s,\rho_N,K_N/\Q))$ has infinitely many zeros for $\sigma>1$.
\end{corollary}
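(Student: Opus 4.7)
The plan is to deduce this corollary directly from Theorem \ref{theorem:main} applied to $F_j(s)=L(s,\rho_j,K_j/\Q)$; all the work is in verifying axioms \ref{hp:DS}--\ref{hp:SC} and pairwise orthogonality. As the author already observes in the introduction, \ref{hp:DS}--\ref{hp:HH} are immediate from the definition of the Artin Euler product, with the trace bound $|a_j(p)|\le\dim V_j$ giving \ref{hp:RC} and the geometric decay of the higher local factors giving \ref{hp:HH}.

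The substantive point is the Selberg-type orthogonality \ref{hp:SC}. To treat the whole family on an equal footing I would first pass all representations to the common Galois group $G=\mathrm{Gal}(K_1\cdots K_N/\Q)$, inflating each $\rho_j$ through the canonical surjection $G\to G_j$ to obtain $\tilde\rho_j$ on $G$. By invariance of Artin $L$-functions under inflation one has $L(s,\tilde\rho_j,K_1\cdots K_N/\Q)=L(s,\rho_j,K_j/\Q)$, and by hypothesis the $\tilde\rho_j$ are pairwise distinct and irreducible in $G$. For unramified $p$ the coefficient $a_j(p)=\chi_{\tilde\rho_j}(\mathrm{Frob}_p)$ is a well-defined value on the Frobenius conjugacy class, and Chebotarev's density theorem combined with partial summation yields
$$\sum_{p\le x}\frac{a_j(p)\overline{a_k(p)}}{p}=\langle\chi_{\tilde\rho_j},\chi_{\tilde\rho_k}\rangle\log\log x+o(\log\log x);$$
by the orthogonality relations for irreducible characters this equals $\log\log x+o(\log\log x)$ when $j=k$ and $o(\log\log x)$ otherwise. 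So \ref{hp:SC} holds with $m_{F_j,F_k}=\delta_{jk}$, and in particular the $F_j$ are pairwise orthogonal with $m_{F_j,F_j}=1>0$.

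With the hypotheses of Theorem \ref{theorem:main} now in force and $P\in\mathcal{F}[X_1,\ldots,X_N]$ not a monomial, the conclusion that $P(L(s,\rho_1,K_1/\Q),\ldots,L(s,\rho_N,K_N/\Q))$ has infinitely many zeros for $\sigma>1$ follows immediately. The only real subtlety---hardly an obstacle---is the inflation step, which is needed to put the $\rho_j$ (originally defined on possibly different groups $G_j$) onto a single group so that the assumption ``distinct irreducible representations of $G$'' can be translated into the pairwise vanishing of the character inner products; once that bookkeeping is done, character orthogonality together with Chebotarev does all the analytic work and Theorem \ref{theorem:main} supplies the rest.
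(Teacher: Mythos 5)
Your proposal is correct and follows exactly the route the paper intends: verify axioms \ref{hp:DS}--\ref{hp:HH} from the definition of the Artin Euler product, obtain \ref{hp:SC} and pairwise orthogonality by inflating to the common Galois group $G$ and combining Chebotarev's density theorem with the orthogonality relations for irreducible characters, and then invoke Theorem \ref{theorem:main}. The paper gives no separate written proof beyond these same remarks, so your argument (which also implicitly yields the required distinctness of the $F_j$, since $m_{F_j,F_k}=\delta_{jk}$ forces $F_j\neq F_k$ for $j\neq k$) is a faithful, slightly more detailed version of the intended one.
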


\noindent\textbf{Automorphic $L$-functions.} For an introduction on $L$-series attached to unitary cuspidal automorphic representations of $\mathrm{GL}_r(\mathbb{A}_\Q)$ we refer to Rudnick and Sarnak \cite{rudnick}, and Iwaniec and Sarnak \cite{iwaniec}. Let $\pi=\otimes_p \pi_p$ be a unitary cuspidal automorphic representation of $\mathrm{GL}_r(\mathbb{A}_\Q)$, for some integer $r\geq 1$, and $L(s,\pi)$ be the associated $L$-function. It is an easy consequence of the definition that \ref{hp:DS} and \ref{hp:EP} hold for $L(s,\pi)$. On the other hand \ref{hp:RC}, \ref{hp:HH} and \ref{hp:SC} have not been yet proved in full generality, but they are known to hold under the Ramanujan conjecture: \ref{hp:RC} and \ref{hp:HH} follow immediately, while \ref{hp:SC}, as pointed out by Bombieri and Hejhal in \cite{bombierihejhal} and by Kaczorowski, Molteni and Perelli in \cite{kmp2}, follows from the properties of the Rankin--Selberg convolution (cf. Liu and Ye \cite{liu2} for a detailed proof of this fact).

We hence have the following, which is similar to Theorem 1.2 of Booker and Thorne \cite{booker}.

\begin{corollary}\label{corollary:automorphic_L-functions}
Fix an integer $N\geq 1$. For any $j=1,\ldots,N$, let be given a positive integer $r_j$ and a unitary cuspidal automorphic representation of $\mathrm{GL}_{r_j}(\mathbb{A}_\Q)$ with $L$-series $L(s,\pi_j)=\sum_{n=1}^\infty a_j(n)n^{-s}$. Suppose furthermore that $\pi_1,\ldots,\pi_N$ satisfy the generalized Ramanujan conjecture at all finite places (so that, in particular, $|a_j(p)|\leq r_j$ for all primes $p$ and $j=1,\ldots,N$) and are pairwise non-isomorphic. Then, if $P\in \mathcal{F}[X_1,\ldots,X_N]$ is not a monomial, $P(L(s,\pi_1),\ldots,L(s,\pi_N))$ has infinitely many zeros for $\sigma>1$.
\end{corollary}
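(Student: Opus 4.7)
The plan is to derive Corollary \ref{corollary:automorphic_L-functions} as a direct application of Theorem \ref{theorem:main}, by verifying that each $L(s,\pi_j)$ belongs to the class $\mathcal{E}$ and that the $N$ of them are pairwise orthogonal in the sense of the definition preceding Proposition \ref{proposition:main}.

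First I would recall the standard facts: each $L(s,\pi_j)$ admits a Dirichlet series and an Euler product of degree $r_j$ absolutely convergent for $\sigma>1$, which gives \ref{hp:DS} and \ref{hp:EP}. Property \ref{hp:RC} is precisely the Ramanujan bound at finite places that has been assumed, and in fact the statement $|a_j(p)|\leq r_j$ is recorded in the hypotheses. For \ref{hp:HH} I would write $\log L_p(s,\pi_j)=\sum_{k\geq 1}\frac{1}{kp^{ks}}\sum_{i=1}^{r_j}\alpha_{j,i}(p)^k$, where the local Satake parameters satisfy $|\alpha_{j,i}(p)|\leq 1$ by Ramanujan, so that $|b_{F_j}(p^k)|\leq r_j/k$ and thus $\sum_p\sum_{k\geq 2}|b_{F_j}(p^k)|p^{-k}\leq r_j\sum_p\sum_{k\geq 2}p^{-k}<\infty$.

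The substantive input is \ref{hp:SC} together with the orthogonality $m_{F_i,F_j}=0$ for $i\neq j$. As noted in the paper, this is the content of the Rankin--Selberg method: the Rankin--Selberg convolution $L(s,\pi_i\times\tilde{\pi}_j)$ extends meromorphically to $\C$, is holomorphic at $s=1$ when $\pi_i\not\simeq\pi_j$ and has a simple pole when $\pi_i\simeq\pi_j$. Under Ramanujan, one obtains, via a standard Mertens-type argument on $\log L(s,\pi_i\times\tilde{\pi}_j)$,
\[
\sum_{p\leq x}\frac{a_i(p)\overline{a_j(p)}}{p}=\bigl(\delta_{ij}+o(1)\bigr)\log\log x,
\]
which gives \ref{hp:SC} with $m_{F_i,F_j}=\delta_{ij}$; in particular the $F_j$ are pairwise orthogonal with $m_{F_j,F_j}=1>0$. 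I would cite Liu--Ye \cite{liu2} for a complete proof along these lines, as is done in the paragraph preceding the corollary.

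With \ref{hp:DS}--\ref{hp:SC} verified and the orthogonality established, Theorem \ref{theorem:main} applies directly to $P\in\mathcal{F}[X_1,\ldots,X_N]$: unless $P$ is a monomial, $P(L(s,\pi_1),\ldots,L(s,\pi_N))$ has infinitely many zeros in $\sigma>1$, and in fact the stronger lower bound of order $T$ on the number of zeros in any sufficiently thin strip of height $T$ holds. There is really no obstacle beyond the bookkeeping of hypotheses; the only subtle point is the appeal to Rankin--Selberg for \ref{hp:SC}, which is where the Ramanujan hypothesis at finite places is genuinely used (to upgrade the bound on the secondary terms so that the sum over prime powers is negligible and the $\log\log x$ asymptotic holds with the correct constant).
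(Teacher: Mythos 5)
Your proposal is correct and follows essentially the same route as the paper: the paper's justification of this corollary is precisely the verification, in the paragraph preceding it, that \ref{hp:DS}--\ref{hp:EP} hold by definition while \ref{hp:RC}, \ref{hp:HH} and \ref{hp:SC} (with $m_{F_i,F_j}=\delta_{ij}$, via Rankin--Selberg as in Liu--Ye) hold under the Ramanujan conjecture, followed by a direct appeal to Theorem \ref{theorem:main}. Your explicit Satake-parameter computation for \ref{hp:HH} is a correct filling-in of a step the paper leaves to the reader.
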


\noindent\textbf{Selberg class.} For an introduction on the Selberg class we refer to the original paper of Selberg \cite{selberg}, and the surveys of Kaczorowski \cite{kaczorowski}, Kaczorowski and Perelli \cite{kaczper1}, and Perelli \cite{perelli1,perelli2}. The Selberg class $\selb$ is an axiomatically defined class of complex functions, introduced by Selberg \cite{selberg}, and we have that $F\in\selb$ satisfies \ref{hp:DS}, \ref{hp:EP} and \ref{hp:HH} as an easy consequence of the definition. However, in this setting \ref{hp:RC} and \ref{hp:SC} are not known, but they are expected to be true. For instance, \ref{hp:SC} corresponds to a deep conjecture for $\selb$, that is Selberg orthonormality conjecture (SOC) for \emph{primitive} elements (see for example \cite{perelli1} for an account on some of the interesting consequences which would follow). On the other hand, if we restrict to the subsemigroup $\selb^{\mathrm{poly}}$ of $\selb$ consisting of elements of $\selb$ with polynomial Euler product (see \cite{kaczper3} for an introduction on $\selb^{\mathrm{poly}}$), then \ref{hp:RC} follows immediately from the hypotheses. Hence, if we assume SOC we have the following result for $\selb^{\mathrm{poly}}$.

\begin{corollary}\label{corollary:selberg_class}
Fix an integer $N\geq 1$. For $j=1,\ldots,N$, let be given distinct primitive functions $F_j(s)=\sum_{n}a_j(n)n^{-s}\in\selb^{\mathrm{poly}}$. Suppose that SOC holds, then, if $P\in \mathcal{F}[X_1,\ldots,X_N]$ is not a monomial, $P(F_1(s),\ldots,F_N(s))$ has infinitely many zeros for $\sigma>1$.
\end{corollary}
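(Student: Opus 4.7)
The plan is to reduce Corollary \ref{corollary:selberg_class} to Theorem \ref{theorem:main} by simply verifying that the hypotheses \ref{hp:DS}--\ref{hp:SC} are satisfied for distinct primitive elements $F_1,\ldots,F_N\in\selb^{\mathrm{poly}}$ under SOC, and that these $F_j$ are then pairwise orthogonal in the sense of the paper. The conclusion will then be immediate.

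First I would recall, as already noted in the discussion preceding the corollary, that any $F\in\selb$ satisfies \ref{hp:DS}, \ref{hp:EP}, \ref{hp:HH} straight from the axioms of the Selberg class, so nothing further needs to be checked for these. For \ref{hp:RC}, I would invoke the defining property of $\selb^{\mathrm{poly}}$: each local factor has the shape $F_p(s)=\prod_{i=1}^{d_F}(1-\alpha_i(p)/p^s)^{-1}$ with $|\alpha_i(p)|\leq 1$, so $|a_F(p)|=|\sum_i \alpha_i(p)|\leq d_F$, giving \ref{hp:RC} with constant $K_F=d_F$ uniformly in $p$.

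The essential point is \ref{hp:SC}. Here I would invoke the Selberg orthonormality conjecture for primitive elements: for primitive $F,G\in\selb$ one has
$$\sum_{p\leq x}\frac{a_F(p)\conj{a_G(p)}}{p}=\delta_{F,G}\log\log x+O(1)\quad\text{as }x\to\infty,$$
where $\delta_{F,G}=1$ if $F=G$ and $0$ otherwise. This directly gives \ref{hp:SC} with $m_{F,G}=\delta_{F,G}$, in particular $m_{F,F}=1>0$, so $\{F_1,\ldots,F_N\}\subset\mathcal{E}$. Moreover, since the $F_j$ are assumed distinct and primitive, SOC yields $m_{F_i,F_j}=\delta_{i,j}$, so they are pairwise orthogonal in the sense of the paper.

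All the hypotheses of Theorem \ref{theorem:main} being satisfied, an application of that theorem to $P$ yields both the existence of infinitely many zeros for $\sigma>1$ and the claimed positive-density lower bound in any sufficiently narrow strip $1<\sigma_1<\sigma<\sigma_2\leq 1+\eta$. I do not anticipate any real obstacle here: the only nontrivial input is SOC, which is explicitly assumed, and everything else is a direct unpacking of the definition of $\selb^{\mathrm{poly}}$. The proof is therefore essentially a one-line reduction to Theorem \ref{theorem:main}.
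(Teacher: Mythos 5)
Your proposal is correct and matches the paper's own treatment: the corollary is obtained exactly by verifying \ref{hp:DS}--\ref{hp:HH} from the Selberg class axioms, \ref{hp:RC} from the polynomial Euler product (bounded local roots), and \ref{hp:SC} from SOC for primitive elements, and then applying Theorem \ref{theorem:main}. Nothing further is needed.
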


As a final remark of this section we note that the elements of $\mathcal{E}$ are neither required to satisfy any functional equation nor to have a meromorphic continuation to the whole complex plane, thus Theorem \ref{theorem:main} may actually have a wider range of application than the examples given here (which conjecturally cover all $L$-functions), even though we are not aware of any example of such a class of Euler products.

\section{Densities of sets of primes}

The aim of this section is to show a result on the Dirichlet density of non-zero coefficients of $L$-functions. To this end we need some basic facts about densities of subsets of integers, but we weren't able to find any precise reference for them, although most of the followings may be easily deduced from Chapter III.1 of Tenenbaum \cite{tenenbaum}. 

\begin{definition}\label{def:natural_density}
Suppose $A\subseteq B\subseteq \N$, then we say that $A$ has \textbf{natural density} $d_B(A)$ in $B$ if
$$\lim_{x\rightarrow\infty}\frac{A(x)}{B(x)}=d_B(A),$$
where $A(x)=\#\{n\in A\mid n\leq x\}$ is the counting function.
Moreover we say that $A$ has \textbf{lower natural density} (resp. \textbf{upper natural density}) $\underline{d}_B(A)$ (resp. $\overline{d}_B(A)$) in $B$ if    
$$\liminf_{x\rightarrow\infty}\frac{A(x)}{B(x)}=\underline{d}_B(A)\quad (\hbox{resp. }\limsup= \overline{d}_B(A)).$$
\end{definition}

\begin{lemma}\label{lemma:subset_given_density}
Given any infinite subset $Q\subseteq \N$ and any $0\leq\alpha\leq1$, there exists a subset $A\subseteq Q$, such that $d_Q(A)=\alpha$.
\end{lemma}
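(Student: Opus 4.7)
The plan is to construct $A$ explicitly by a simple greedy/rounding recipe that directly controls the ratio $A(x)/Q(x)$ at the jumps.

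First I would enumerate $Q$ in increasing order as $Q=\{q_1<q_2<q_3<\cdots\}$, which is possible because $Q\subseteq\N$ is infinite. With this enumeration the counting function of $Q$ satisfies $Q(q_n)=n$, and moreover $Q(x)=n$ for every $x\in[q_n,q_{n+1})$. The crucial observation is that since $A\subseteq Q$, the counting function $A(x)$ is also constant on each interval $[q_n,q_{n+1})$, so it is enough to ensure that the sequence $A(q_n)/n$ converges to $\alpha$; the convergence along arbitrary $x\to\infty$ follows automatically because $n=Q(x)\to\infty$.

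Next I would define
\[
A=\bigl\{q_n \;\bigm|\; \lfloor \alpha n\rfloor-\lfloor \alpha(n-1)\rfloor=1\bigr\}\subseteq Q
\]
(with the convention $\lfloor\alpha\cdot 0\rfloor=0$). A telescoping sum gives
\[
\#\bigl(A\cap\{q_1,\ldots,q_n\}\bigr)=\sum_{k=1}^n\bigl(\lfloor\alpha k\rfloor-\lfloor\alpha(k-1)\rfloor\bigr)=\lfloor\alpha n\rfloor,
\]
so $A(q_n)=\lfloor\alpha n\rfloor$. Combining this with the remark above, for any $x\geq q_1$ with $n=Q(x)$ we get
\[
\frac{A(x)}{Q(x)}=\frac{\lfloor\alpha n\rfloor}{n}\xrightarrow{\,x\to\infty\,}\alpha,
\]
which is exactly the conclusion $d_Q(A)=\alpha$. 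The extreme cases are automatic: $\alpha=0$ yields $A=\emptyset$, and $\alpha=1$ yields $A=Q$.

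I do not anticipate any serious obstacle here; the only points requiring a line of care are the telescoping identity (so that the construction really produces density $\lfloor\alpha n\rfloor/n$ rather than something off by $1$) and the observation that $A(x)/Q(x)$ is piecewise constant in $x$, which is what reduces the problem to a statement about the single sequence $\lfloor\alpha n\rfloor/n$.
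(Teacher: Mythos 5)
Your proof is correct and takes essentially the same approach as the paper: an explicit construction of $A$ that selects elements of $Q$ at rate $\alpha$, yours via the increment criterion $\lfloor\alpha n\rfloor-\lfloor\alpha(n-1)\rfloor=1$, the paper's via the dual indexing $A=\{p_{\lfloor\alpha^{-1}n\rfloor}\}$. Your bookkeeping is slightly cleaner, since the exact identity $A(q_n)=\lfloor\alpha n\rfloor$ replaces the paper's squeeze argument and handles $\alpha=0$ and $\alpha=1$ uniformly.
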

\begin{proof}If $\alpha=0$, then we can take $A$ as any finite subset of $Q$. If $0<\alpha\leq 1$, let $p_n$ indicate the $n$-th element of $Q$, then take $A=\{p_{\lfloor \alpha^{-1} n \rfloor}\mid n\in \N\}$. Since $A(x)=\max \{n\mid p_{\lfloor \alpha^{-1} n \rfloor}\leq x\}$ and $p_{\lfloor \alpha^{-1} A(x) \rfloor}\in Q$, then $Q(x)\geq \lfloor \frac{A(x)}{\alpha} \rfloor$. On the other hand $x<p_{\lfloor \alpha^{-1} (A(x)+1) \rfloor}\in Q$, hence $Q(x)< \left\lfloor \frac{A(x)+1}{\alpha}\right\rfloor$. Therefore the result follows from the squeeze theorem.
\end{proof}

\begin{definition}\label{def:Dirichlet_density}
Suppose $A\subseteq B\subseteq \N$, then we say that $A$ has \textbf{Dirichlet density} (or \textbf{analytic density}) $\delta_B(A)$ in $B$ if
$$\lim_{\sigma\rightarrow 1^+}\frac{\sum_{n\in A}n^{-\sigma}}{\sum_{n\in B}n^{-\sigma}}=\delta_B(A),$$
Moreover we say that $A$ has \textbf{lower Dirichlet density} (resp. \textbf{upper Dirichlet density}) $\underline{\delta}_B(A)$ (resp. $\overline{\delta}_B(A)$) in $B$ if    
$$\liminf_{\sigma\rightarrow 1^+}\frac{\sum_{n\in A}n^{-\sigma}}{\sum_{n\in B}n^{-\sigma}}=\underline{\delta}_B(A)\quad (\hbox{resp. }\limsup= \overline{\delta}_B(A)).$$
\end{definition}

\begin{remark}\label{remark:denom}
Since it is well known that
$$\sum_{p}\frac{1}{p^\sigma}=-\log(\sigma-1) + O(1),\quad \sigma\rightarrow 1^+,$$
then for any $Q\subseteq\mathcal{P}$, we have
$$\delta_{\mathcal{P}}(Q)=\lim_{\sigma\rightarrow 1^+}\frac{\sum_{p\in Q}p^{-\sigma}}{-\log (\sigma-1)}.$$
Analogously for $\underline{\delta}_{\mathcal{P}}(A)$ and $\overline{\delta}_{\mathcal{P}}(A)$.
\end{remark}

There is a relation between natural density and Dirichlet density, that is

\begin{lemma}\label{lemma:Dirichlet_density}
Let $A\subseteq B\subseteq \N$ and suppose that $\sum_{n\in B}n^{-1}$ diverges, then
$$\underline{d}_B(A)\leq \underline{\delta}_B(A)\leq \overline{\delta}_B(A)\leq \overline{d}_B(A).$$
In particular, it follows that if $A$ has natural density $d_B(A)$ in $B$, then it has Dirichlet density $\delta_B(A)=d_B(A)$ in $B$.
\end{lemma}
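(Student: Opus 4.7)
The plan is to use Abel summation to convert each Dirichlet sum into an integral transform of its counting function, and then compare these integrals tail-to-tail via the natural density bounds, exploiting the fact that the denominator blows up at $\sigma=1$ to kill any bounded contribution from an initial segment.

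For every $\sigma>1$, partial summation gives
$$S_A(\sigma):=\sum_{n\in A}n^{-\sigma}=\sigma\int_1^{\infty}\frac{A(t)}{t^{\sigma+1}}\,dt,$$
since the boundary term $A(x)x^{-\sigma}\leq x^{1-\sigma}$ vanishes as $x\to\infty$, and likewise for $S_B(\sigma)$. The hypothesis $\sum_{n\in B}n^{-1}=\infty$, together with monotone convergence, forces $S_B(\sigma)\to\infty$ as $\sigma\to 1^+$; in view of Remark \ref{remark:denom} the ratio $S_A(\sigma)/S_B(\sigma)$ is exactly what computes the Dirichlet densities.

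For the upper inequality, fix any $\beta>\overline{d}_B(A)$. By definition of $\limsup$, there is $x_0$ with $A(t)\leq\beta B(t)$ for every $t\geq x_0$. Splitting the integral representation at $x_0$, using $A(t)\leq t$ to bound the initial piece, we obtain
$$S_A(\sigma)\leq \sigma\int_1^{x_0}\frac{A(t)}{t^{\sigma+1}}\,dt+\beta\,\sigma\int_{x_0}^{\infty}\frac{B(t)}{t^{\sigma+1}}\,dt\leq C(x_0)+\beta\,S_B(\sigma),$$
where $C(x_0)$ is a constant uniform in $\sigma\in(1,2]$ because $\sigma\int_1^{x_0}t^{-\sigma}\,dt$ is bounded there. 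Dividing by $S_B(\sigma)$ and letting $\sigma\to 1^+$, the divergence of $S_B(\sigma)$ absorbs $C(x_0)$, giving $\overline{\delta}_B(A)\leq\beta$; letting $\beta\downarrow\overline{d}_B(A)$ yields $\overline{\delta}_B(A)\leq\overline{d}_B(A)$.

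The symmetric argument handles the lower inequality: for any $\alpha<\underline{d}_B(A)$ pick $x_0$ with $A(t)\geq\alpha B(t)$ for $t\geq x_0$, drop the (nonnegative) initial piece of $S_A(\sigma)$, and compare the tail with $S_B(\sigma)$ to get $S_A(\sigma)\geq\alpha S_B(\sigma)-\alpha C'(x_0)$; dividing by $S_B(\sigma)\to\infty$ and letting $\alpha\uparrow\underline{d}_B(A)$ gives $\underline{\delta}_B(A)\geq\underline{d}_B(A)$. The middle inequality $\underline{\delta}_B(A)\leq\overline{\delta}_B(A)$ is immediate from the definitions, and the final assertion (equality of the densities when the natural density exists) follows by squeezing. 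There is no serious obstacle: the only point needing care is the uniform bound on the initial integrals, which is entirely routine; the whole argument is the standard Abelian comparison between natural and analytic density.
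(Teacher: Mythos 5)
Your proof is correct and follows essentially the same route as the paper: partial summation to write $\sum_{n\in A}n^{-\sigma}=\sigma\int_1^\infty A(t)t^{-\sigma-1}\,dt$, comparison of $A(t)$ with $B(t)$ beyond some $x_0$ via the definition of upper/lower natural density, and absorption of the bounded initial contribution using the divergence of $\sum_{n\in B}n^{-\sigma}$ as $\sigma\to 1^+$. The only cosmetic difference is that you split the integral at $x_0$ and bound the head separately, whereas the paper folds the head into a single additive constant $M$ valid for all $x>0$; the substance is identical.
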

\begin{proof}
We first observe that by partial summation we have
\begin{equation}\label{eq:part_summ}
\sum_{n\in A}\frac{1}{n^\sigma}=\sigma\int_{1^-}^\infty x^{-\sigma-1}A(x)dx.
\end{equation}
On the other hand, by definition we have that for any $\eps>0$ there exists $x_0$ such that
$$(\underline{d}_B(A)-\eps)B(x) < A(x) <(\overline{d}_B(A)+\eps)B(x),$$
for any $x>x_0$. Actually, there exists $M>0$ such that 
$$(\underline{d}_B(A)-\eps)B(x) - M < A(x) <(\overline{d}_B(A)+\eps)B(x) + M,$$
for any $x>0$. Hence, inserting these inequalities in \eqref{eq:part_summ}, we have
$$(\underline{d}_B(A)-\eps)\sum_{n\in B}\frac{1}{n^\sigma} - M<\sum_{n\in A}\frac{1}{n^\sigma}<(\overline{d}_B(A)+\eps)\sum_{n\in B}\frac{1}{n^\sigma} + M.$$
Dividing by $\sum_{n\in B}n^{-\sigma}$ and taking the $\liminf$ or the $\limsup$ as $\sigma\rightarrow1^+$, we get
$$\underline{d}_B(A)-\eps\leq \underline{\delta}_B(A)\leq \overline{\delta}_B(A)\leq \overline{d}_B(A)+\eps.$$
For the arbitrariness of $\eps$, we can make $\eps\rightarrow 0^+$ and we obtain the result.
\end{proof}

We now state some basic and general properties about $\limsup$ and $\liminf$ (see for example \cite[\S{II.5} Exercise 2]{amann}).

\begin{lemma}\label{lemma:trivial_proposition}
Given $f,g:\R\rightarrow[0,1]$ and a point $x_0\in\R\cup\{\pm\infty\}$, the following hold:
\begin{equation}\label{eq:inf-_-sup}
\liminf_{x\rightarrow x_0} [-f(x)]= -\limsup_{x\rightarrow x_0} f(x);
\end{equation}
\begin{spliteq}\label{eq:inf_inf_inf}
\liminf_{x\rightarrow x_0} [f(x)+g(x)]&\geq \liminf_{x\rightarrow x_0} f(x) + \liminf_{x\rightarrow x_0} g(x)\hbox{, and }\\
\liminf_{x\rightarrow x_0} [f(x)\cdot g(x)]&\geq \liminf_{x\rightarrow x_0} f(x) \cdot \liminf_{x\rightarrow x_0} g(x);
\end{spliteq}
\begin{spliteq}\label{eq:sup_sup_sup}
\limsup_{x\rightarrow x_0} [f(x)+g(x)]&\leq \limsup_{x\rightarrow x_0} f(x) + \limsup_{x\rightarrow x_0} g(x)\hbox{, and }\\
\limsup_{x\rightarrow x_0} [f(x)\cdot g(x)]&\leq \limsup_{x\rightarrow x_0} f(x) \cdot \limsup_{x\rightarrow x_0} g(x);
\end{spliteq}
\begin{equation}\label{eq:inf_sup_inf}
\liminf_{x\rightarrow x_0} [f(x)+g(x)]\leq \liminf_{x\rightarrow x_0} f(x) + \limsup_{x\rightarrow x_0} g(x);
\end{equation}
if $f(x)+g(x)=1$ for all $x\in\R$, then
\begin{equation}\label{eq:inf_1-sup}
\liminf_{x\rightarrow x_0} f(x) + \limsup_{x\rightarrow x_0} g(x) = 1;
\end{equation}
if $f(x)\leq g(x)$ for all $x\in\R$, then
\begin{equation}\label{eq:inf_inf_sup_sup}
\liminf_{x\rightarrow x_0} f(x)\leq \liminf_{x\rightarrow x_0} g(x),\hbox{ and}\quad\limsup_{x\rightarrow x_0} f(x)\leq \limsup_{x\rightarrow x_0} g(x).
\end{equation}
\end{lemma}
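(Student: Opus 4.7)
The plan is to reduce all six statements to elementary properties of $\inf$ and $\sup$ on a single set, by using the monotone characterizations
\begin{equation*}
\liminf_{x\to x_0} f(x) = \sup_{\delta>0}\inf_{x\in U_\delta} f(x), \qquad \limsup_{x\to x_0} f(x) = \inf_{\delta>0}\sup_{x\in U_\delta} f(x),
\end{equation*}
where $U_\delta$ denotes a punctured $\delta$-neighbourhood of $x_0$ (with the obvious modifications when $x_0=\pm\infty$). Each claim in the lemma then follows by first establishing the corresponding inequality for $\inf$ and/or $\sup$ over a fixed $U_\delta$, and then passing to the monotone limit as $\delta\to 0^+$. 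Since $f,g$ take values in the compact interval $[0,1]$, all suprema and infima are real numbers in $[0,1]$, and no indeterminate forms arise in the limit.

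Concretely, I would deduce \eqref{eq:inf-_-sup} from $\inf_{U_\delta}(-f)=-\sup_{U_\delta} f$; the additive parts of \eqref{eq:inf_inf_inf} and \eqref{eq:sup_sup_sup} from the pointwise bounds $\inf_{U_\delta}(f+g)\geq \inf_{U_\delta}f+\inf_{U_\delta}g$ and $\sup_{U_\delta}(f+g)\leq \sup_{U_\delta}f+\sup_{U_\delta}g$; the multiplicative parts from the analogous estimates for products, which require nonnegativity of $f,g$ (hence the choice of codomain $[0,1]$ in the statement); inequality \eqref{eq:inf_sup_inf} from $\inf_{U_\delta}(f+g)\leq \inf_{U_\delta}f+\sup_{U_\delta}g$ (bound the $g$-term uniformly, then take the infimum over $f$); and \eqref{eq:inf_inf_sup_sup} from the monotonicity of $\inf$ and $\sup$ under the hypothesis $f\leq g$. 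Identity \eqref{eq:inf_1-sup} is then a short consequence: applying \eqref{eq:inf-_-sup} to $-f$ and using $g=1-f$ gives $\limsup g = 1 - \liminf f$.

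I do not expect any genuine obstacle: these are standard real-analysis exercises, and indeed the author defers to Exercise~2 of \S II.5 of \cite{amann}. The only points requiring a little care are the multiplicative bounds in \eqref{eq:inf_inf_inf} and \eqref{eq:sup_sup_sup}, where one must invoke $f,g\geq 0$; and the fact that in \eqref{eq:inf_sup_inf} one really does need a $\limsup$ on the right-hand side, since simply summing two liminfs would give the opposite inequality. Both are handled transparently by the fixed-$U_\delta$ reduction above.
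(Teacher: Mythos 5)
Your proposal is correct, and the paper itself offers no proof of this lemma — it simply cites Exercise~2 of \S II.5 of \cite{amann} — so your fixed-neighbourhood reduction via $\liminf_{x\to x_0}f=\sup_{\delta>0}\inf_{U_\delta}f$ and the monotone passage $\delta\to 0^+$ is exactly the standard argument being invoked. All the delicate points (nonnegativity for the product inequalities, the mixed $\inf$/$\sup$ bound for \eqref{eq:inf_sup_inf}, and the existence of the monotone limits so that the sum of an increasing and a decreasing net converges to the sum of the limits) are handled correctly.
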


We will need the following lemma.

\begin{lemma}\label{lemma:log2dir}
Let $\{a(p)\}_{p\in\mathcal{P}}\subset \C$ be such that
\begin{equation}\label{eq:hp_SOC}
\sum_{p\leq x}\frac{|a(p)|^2}{p}\sim \kappa\log\log x,\quad x\rightarrow \infty,
\end{equation}
with $\kappa>0$. Then
\begin{equation}\label{eq:equiv_SOC}
\sum_{p\in\mathcal{P}}\frac{|a(p)|^2}{p^\sigma}\sim -\kappa\log(\sigma-1),\quad \sigma\rightarrow 1^+.
\end{equation}
\end{lemma}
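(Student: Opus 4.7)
The plan is to deduce the Dirichlet-series asymptotic from the hypothesis by Abel summation. Set $S(x)=\sum_{p\leq x}|a(p)|^2/p$, so that \eqref{eq:hp_SOC} reads $S(x)=\kappa\log\log x+r(x)$ with $r(x)=o(\log\log x)$. Writing $|a(p)|^2/p^\sigma=(|a(p)|^2/p)\cdot p^{-(\sigma-1)}$ and integrating by parts (the boundary term vanishes because $X^{-(\sigma-1)}S(X)\to 0$ for every $\sigma>1$), the sum becomes
$$\sum_{p}\frac{|a(p)|^2}{p^\sigma}\;=\;(\sigma-1)\int_2^\infty S(x)\,x^{-\sigma}\,dx.$$

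For the main contribution I evaluate $(\sigma-1)\int_2^\infty\log\log x\cdot x^{-\sigma}\,dx$ through the change of variables $u=(\sigma-1)\log x$, which reduces it to
$$\int_{(\sigma-1)\log 2}^\infty e^{-u}\bigl(\log u-\log(\sigma-1)\bigr)\,du\;=\;-\log(\sigma-1)-\gamma+o(1),\qquad \sigma\to 1^+,$$
where the constant $-\gamma=\Gamma'(1)$ comes from the standard integral $\int_0^\infty e^{-u}\log u\,du$. Hence the $\kappa\log\log x$ piece already contributes $-\kappa\log(\sigma-1)+O(1)$, which matches the claimed asymptotic up to a bounded term.

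The delicate step, as in any Abelian theorem of this shape, is to control the remainder $(\sigma-1)\int_2^\infty r(x)\,x^{-\sigma}\,dx$ and show that it is $o\bigl(|\log(\sigma-1)|\bigr)$. I plan to use the standard $\varepsilon$-splitting: given $\varepsilon>0$, pick $X$ so large that $|r(x)|\leq\varepsilon\log\log x$ for $x\geq X$. The tail $[X,\infty)$ is then bounded by $\varepsilon$ times the main-term integral computed above, giving $O(\varepsilon|\log(\sigma-1)|)$, while on the compact piece $[2,X]$ the function $r$ is bounded, so its contribution is at most a constant times $(\sigma-1)\int_2^X x^{-\sigma}\,dx\to 0$ as $\sigma\to 1^+$. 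Letting $\varepsilon\to 0^+$ yields the required $o(|\log(\sigma-1)|)$ bound, and combining with the main-term calculation proves \eqref{eq:equiv_SOC}.
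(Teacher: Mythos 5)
Your proposal is correct and follows essentially the same route as the paper: partial summation converts the sum into $(\sigma-1)\int S(x)x^{-\sigma}\,dx$, the substitution $u=(\sigma-1)\log x$ reduces the main term to $\int_0^\infty e^{-u}\bigl(\log u-\log(\sigma-1)\bigr)\,du=-\log(\sigma-1)+O(1)$, and the error is handled by an $\varepsilon$-splitting. The paper phrases that last step as sandwiching $S(x)$ between $(\kappa\mp\eta)\log\log x$ for large $x$ and taking $\liminf$/$\limsup$, which is the same argument as your decomposition $S(x)=\kappa\log\log x+r(x)$ with $r(x)=o(\log\log x)$.
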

\begin{proof}
By \eqref{eq:hp_SOC} we have that for any arbitrarily fixed $\eta>0$ there exists $x_0$ such that for any $x\geq x_0$ we have $L(x)=\sum_{p\leq x}\frac{|a(p)|^2}{p}\geq (\kappa-\eta)\log\log x$. Then, for any $\sigma>1$, by partial summation we get
\begin{spliteq*}
\sum_{p\in \mathcal{P}}\frac{|a(p)|^2}{p^\sigma} &= (\sigma-1)\int_1^\infty x^{-\sigma} L(x)dx\\
&\geq (\sigma-1)\int_1^\infty \frac{(\kappa-\eta)\log\log x}{x^\sigma}dx+O_\eta(\sigma-1)\\
&= (\kappa-\eta)\int_0^{\infty}\exp{-w} \log \left(\frac{w}{\sigma-1}\right) dw+O_\eta(\sigma-1)\\
&= -(\kappa-\eta)\log (\sigma-1) + O_\eta(\sigma-1).
\end{spliteq*}
Hence, by \eqref{eq:inf_inf_sup_sup}, we have
$$\liminf_{\sigma\rightarrow 1^+}\frac{\sum_{p\in\mathcal{P}}\frac{|a(p)|^2}{p^\sigma}}{-\log(\sigma-1)}\geq \kappa-\eta.$$
Analogously we obtain
$$\limsup_{\sigma\rightarrow 1^+}\frac{\sum_{p\in\mathcal{P}}\frac{|a(p)|^2}{p^\sigma}}{-\log(\sigma-1)}\leq \kappa+\eta.$$
Since the $\limsup$ and the $\liminf$ do not depend on $\eta$, which was arbitrarily chosen, we can take the limit for $\eta\rightarrow0^+$ and we obtain \eqref{eq:equiv_SOC}.
\end{proof}

We can now formulate the key lemma for the main result.

\begin{lemma}\label{lemma:positive_density}
Let $\{a(p)\}_{p\in\mathcal{P}}\subset \C$ be such that it satisfies \eqref{eq:hp_SOC} with $\kappa>0$. Suppose furthermore that there exists $M\geq \sqrt{\kappa}$ such that $|a(p)|\leq M$ for every prime $p$. Then, for any $\kappa-\sqrt{\kappa}< \gamma \leq\kappa$ the set $P_\gamma=\{p\in\mathcal{P} \mid |a(p)|\geq \kappa-\gamma\}$ has positive lower Dirichlet density
\begin{equation}\label{eq:lower_bound_density}
\underline{\delta}_\mathcal{P}(P_\gamma)\geq \frac{\kappa-(\kappa-\gamma)^2}{M^2-(\kappa-\gamma)^2}.
\end{equation}
\end{lemma}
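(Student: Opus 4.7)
The natural approach is to bound the weighted prime sum $\sum_p |a(p)|^2 p^{-\sigma}$ both from above (via the split according to $P_\gamma$ versus its complement) and from below (via Lemma \ref{lemma:log2dir}), and then translate the comparison into a bound on the Dirichlet density of $P_\gamma$. The hypothesis $\kappa - \sqrt{\kappa} < \gamma \leq \kappa$ is there precisely to guarantee that $(\kappa-\gamma)^2 < \kappa \leq M^2$, so that the numerator and denominator in the claimed bound are both positive, and the final manipulation does not reverse any inequalities.

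I would first write, for every $\sigma > 1$,
\begin{equation*}
\sum_{p\in\mathcal{P}}\frac{|a(p)|^2}{p^\sigma} = \sum_{p\in P_\gamma}\frac{|a(p)|^2}{p^\sigma} + \sum_{p\notin P_\gamma}\frac{|a(p)|^2}{p^\sigma} \leq M^2\sum_{p\in P_\gamma}\frac{1}{p^\sigma} + (\kappa-\gamma)^2\sum_{p\notin P_\gamma}\frac{1}{p^\sigma},
\end{equation*}
using $|a(p)|\leq M$ on $P_\gamma$ and $|a(p)|<\kappa-\gamma$ on its complement. Rewriting the last term as a difference and rearranging gives
\begin{equation*}
\sum_{p\in\mathcal{P}}\frac{|a(p)|^2}{p^\sigma} \leq \bigl(M^2 - (\kappa-\gamma)^2\bigr)\sum_{p\in P_\gamma}\frac{1}{p^\sigma} + (\kappa-\gamma)^2\sum_{p\in\mathcal{P}}\frac{1}{p^\sigma}.
\end{equation*}

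Next I would divide both sides by $-\log(\sigma-1)$. By Lemma \ref{lemma:log2dir} the ratio on the left tends to $\kappa$ as $\sigma \to 1^+$, and by Remark \ref{remark:denom} the ratio $\sum_p p^{-\sigma}/(-\log(\sigma-1))$ tends to $1$. Taking the $\liminf$ as $\sigma\to 1^+$ and using standard properties of $\liminf$ (the second term on the right converges, so one can add its limit), one obtains
\begin{equation*}
\kappa \leq \bigl(M^2-(\kappa-\gamma)^2\bigr)\,\underline{\delta}_\mathcal{P}(P_\gamma) + (\kappa-\gamma)^2.
\end{equation*}
Since $M^2 - (\kappa-\gamma)^2 > 0$ by the hypothesis on $\gamma$, rearranging yields \eqref{eq:lower_bound_density}.

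The argument is almost entirely routine once the correct splitting is chosen; no step looks like a real obstacle. The only point that requires a moment of care is the passage to $\liminf$ on the upper bound: one has to recognize that on the right-hand side one of the two $\sigma$-dependent quantities (the full prime zeta–type sum normalized by $-\log(\sigma-1)$) actually converges, so that the inequality $\liminf(f+g)\leq \liminf f + \limsup g$ of Lemma \ref{lemma:trivial_proposition}, equation \eqref{eq:inf_sup_inf}, can be applied cleanly to extract the $\liminf$ of the $P_\gamma$-sum alone.
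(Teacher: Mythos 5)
Your proposal is correct and follows essentially the same route as the paper: split $\sum_p |a(p)|^2 p^{-\sigma}$ according to $P_\gamma$ and its complement, bound by $M^2$ and $(\kappa-\gamma)^2$ respectively, normalize by $-\log(\sigma-1)$, take $\liminf$ using Lemma \ref{lemma:log2dir} and \eqref{eq:inf_sup_inf}, and rearrange. The only (immaterial) difference is that you rewrite the complement sum as $\sum_{\mathcal{P}}-\sum_{P_\gamma}$ before passing to the limit, whereas the paper keeps it and uses $\overline{\delta}_\mathcal{P}((P_\gamma)^c)=1-\underline{\delta}_\mathcal{P}(P_\gamma)$ afterwards; the resulting inequality is identical.
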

\begin{proof}
Fix $\kappa-\sqrt{\kappa}< \gamma \leq\kappa$. Then, by hypothesis and Lemma \ref{lemma:log2dir} we have
\begin{spliteq*}
\kappa &=\liminf_{\sigma\rightarrow1^+} \frac{\sum_{p\in \mathcal{P}}\frac{|a(p)|^2}{p^\sigma}}{-\log (\sigma-1)}\\
& \stackrel{\eqref{eq:inf_inf_sup_sup}}{\leq} \liminf_{\sigma\rightarrow1^+} \left[\frac{M^2\sum_{p\in P_\gamma}\frac{1}{p^\sigma}+(\kappa-\gamma)^2\sum_{p\in (P_\gamma)^c}\frac{1}{p^\sigma}}{-\log (\sigma-1)}\right]\\
& \stackrel{\eqref{eq:inf_sup_inf}}{\leq} M^2\underline{\delta}_\mathcal{P}(P_\gamma)+(\kappa-\gamma)^2\overline{\delta}_\mathcal{P}((P_\gamma)^c)\\
& \stackrel{\eqref{eq:inf_1-sup}}{=} M^2\underline{\delta}_\mathcal{P}(P_\gamma) + (\kappa-\gamma)^2(1-\underline{\delta}_\mathcal{P}(P_\gamma)).
\end{spliteq*}
From this \eqref{eq:lower_bound_density} follows immediately, and it is easy to check that it is always positive.
\end{proof}

\begin{corollary}\label{corollary:positive_density}
Let be given an integer $N\geq 1$ and, for $j=1,\ldots,N$, distinct functions $F_j(s)=\sum_{n}a_j(n)n^{-s}\in\mathcal{E}$. If $F_1,\ldots,F_N$ are pairwise orthogonal, then there exists a positive constant $\delta$ such that for any vector $u=(u_1,\ldots,u_N)$ with $|u|=1$, the subset
$$Q_u=\left\{p\in\mathcal{P} \mid \abs{\frac{u_1a_1(p)}{\sqrt{m_{F_1,F_1}}}+\cdots+\frac{u_Na_N(p)}{\sqrt{m_{F_N,F_N}}}}\geq \frac{1}{2}\right\}$$
has positive lower Dirichlet density in $\mathcal{P}$ greater or equal than $\delta$.
\end{corollary}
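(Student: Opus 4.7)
The proof will be a direct application of Lemma \ref{lemma:positive_density} to the sequence $a(p) = \sum_{j=1}^N \frac{u_j a_j(p)}{\sqrt{m_{F_j,F_j}}}$, for which $Q_u = \{p \in \mathcal{P} \mid |a(p)| \geq 1/2\}$. So the plan is to verify the two hypotheses of that lemma with constants that are uniform in $u$.

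First, I would compute the weighted sum over primes. Expanding the square,
\[
|a(p)|^2 = \sum_{j,k=1}^N \frac{u_j \overline{u_k}}{\sqrt{m_{F_j,F_j}\,m_{F_k,F_k}}}\, a_j(p)\overline{a_k(p)},
\]
and then applying hypothesis \ref{hp:SC} to each inner sum yields
\[
\sum_{p\leq x}\frac{|a(p)|^2}{p} = \sum_{j,k=1}^N \frac{u_j \overline{u_k}\, m_{F_j,F_k}}{\sqrt{m_{F_j,F_j}\,m_{F_k,F_k}}}\log\log x + o(\log\log x).
\]
Pairwise orthogonality kills the off-diagonal terms, so only $j=k$ survives and the coefficient reduces to $\sum_j |u_j|^2 = |u|^2 = 1$. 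Thus \eqref{eq:hp_SOC} holds with $\kappa = 1$.

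Second, by the triangle inequality and \ref{hp:RC},
\[
|a(p)| \leq \sum_{j=1}^N \frac{|u_j|\, K_{F_j}}{\sqrt{m_{F_j,F_j}}} \leq \sum_{j=1}^N \frac{K_{F_j}}{\sqrt{m_{F_j,F_j}}} =: M_0,
\]
using $|u_j|\leq|u|=1$. Setting $M = \max(M_0, 1) \geq \sqrt{\kappa}$ gives a bound independent of $u$.

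Finally, I would apply Lemma \ref{lemma:positive_density} with $\kappa = 1$, $M$ as above, and $\gamma = 1/2$ (which satisfies $\kappa - \sqrt{\kappa} = 0 < 1/2 \leq 1 = \kappa$). The conclusion \eqref{eq:lower_bound_density} gives
\[
\underline{\delta}_\mathcal{P}(Q_u) \geq \frac{1 - 1/4}{M^2 - 1/4} = \frac{3}{4M^2 - 1},
\]
and the right-hand side is a positive constant $\delta$ depending only on $N$ and the functions $F_1,\ldots,F_N$, not on $u$. No real obstacle is anticipated; the only point that requires care is the uniformity of the bound $M$ in $u$, which is handled simply by passing to $|u_j| \leq 1$ before invoking \ref{hp:RC}.
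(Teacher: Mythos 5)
Your proposal is correct and follows essentially the same route as the paper: verify \eqref{eq:hp_SOC} with $\kappa=1$ via \ref{hp:SC} and orthogonality, bound $|a(p)|$ uniformly in $u$ via \ref{hp:RC}, and apply Lemma \ref{lemma:positive_density} with $\gamma=\tfrac{1}{2}$. The only (immaterial) difference is that the paper bounds $|a(p)|$ by Cauchy--Schwarz, yielding $M=\bigl(\sum_j K_{F_j}^2/m_{F_j,F_j}\bigr)^{1/2}$ and hence a slightly larger $\delta$, whereas you use the plain triangle inequality.
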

\begin{proof}By \ref{hp:SC} and orthogonality, we have that $\left\{\frac{u_1a_1(p)}{\sqrt{m_{F_1,F_1}}}+\cdots+\frac{u_Na_N(p)}{\sqrt{m_{F_N,F_N}}}\right\}_{p}$ satisfies \eqref{eq:hp_SOC} with $\kappa=1$. Moreover by Cauchy-Schwarz and the triangle inequality we have that
$$\abs{\frac{u_1a_1(p)}{\sqrt{m_{F_1,F_1}}}+\cdots+\frac{u_Na_N(p)}{\sqrt{m_{F_N,F_N}}}}\stackrel{\ref{hp:RC}}{\leq} \sqrt{\frac{K_{F_1}^2}{m_{F_1,F_1}}+\cdots+\frac{K_{F_N}^2}{m_{F_N,F_N}}},$$
for every prime $p$. Since $Q_u$ coincides with the set $P_{\frac{1}{2}}$ of Lemma \ref{lemma:positive_density} applied to the sequence $\left\{\frac{u_1a_1(p)}{\sqrt{m_{F_1,F_1}}}+\cdots+\frac{u_Na_N(p)}{\sqrt{m_{F_N,F_N}}}\right\}_{p}$ with $\gamma=\frac{1}{2}$, we have
$$\underline{\delta}_\mathcal{P}(Q_u)\geq \frac{3}{4\max\left(1,\frac{K_{F_1}^2}{m_{F_1,F_1}}+\cdots+\frac{K_{F_N}^2}{m_{F_N,F_N}}\right)-1}=\delta.$$
\end{proof}

\begin{remark}\label{remark:remove_primes}
For any fixed $y>0$, denote with $\mathcal{P}_y=\{p\in\mathcal{P}\mid p>y\}$. Since $y$ is fixed, $\mathcal{P}_y$ has density 1 in $\mathcal{P}$, thus all of the above still hold if we replace $\mathcal{P}$ with $\mathcal{P}_y$.
\end{remark}

\section{Proof of Proposition \ref{proposition:main}}
\label{section:representation}

Fixed an integer $N\geq 1$, we denote with $\mathrm{GL}_N(\C)$ the topological group of invertible matrices $N\times N$ with complex coefficients. For any $R>0$ we further set
$$D_N(R)=\{z=(z_1,\ldots,z_N)\in \C^N\mid |z_j|\leq R\},\quad D_N=D_N(1),$$
$$T_N(R)=\{z\in\C^N\mid |z_j|=R\},\quad T_N=T_N(1),$$
and we recall Proposition 3.2 of Booker and Thorne \cite{booker}.

\begin{proposition}[Booker--Thorne]\label{proposition:unitary_combination_of_matrices}
Let be given a compact $K\subseteq \mathrm{GL}_N(\C)$. Then there is a number $m_0>0$ such that for every $m\geq m_0$ and all $(g_1,\ldots,g_m)$ in $K^m$, there are continuous functions $f_1,\ldots,f_m: D_N\rightarrow T_N$ such that $\displaystyle\sum_{i=1}^m g_if_i(z)=z$ for all $z\in D_N$.
\end{proposition}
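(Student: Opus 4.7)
The plan is to follow the approach of Saias--Weingartner: first produce a pointwise solution at the base point $z=0$, then use a perturbative/fixed-point argument to extend continuously to all of $D_N$, with compactness of $K$ providing all uniform estimates.

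\emph{Pointwise existence at the origin.} For $m$ sufficiently large I would produce a configuration $(u_1,\ldots,u_m)\in T_N^m$ with $\sum_i g_iu_i=0$. Compactness of $K$ gives a uniform bound $\|g\|,\|g^{-1}\|\leq C$ on $K$; one can then construct such a $(u_i)$ by an elementary Minkowski-sum argument, grouping the indices into pairs and choosing within each pair $u_i,u_j\in T_N$ so that $g_iu_i+g_ju_j=0$, which can be arranged for $m$ large by using a finite cover of $K$ to match indices with sufficiently close $g$'s.

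\emph{Perturbation near the origin.} Seek $f_i(z)=u_i\cdot e^{i\psi_i(z)}$ (coordinatewise product) for some continuous $\psi_i\colon D_N\to\R^N$. The equation $\sum_i g_if_i(z)=z$ becomes $F(\psi)=z$ with $F(\psi)=\sum_i g_i(u_i\cdot e^{i\psi_i})$, whose linearization at $\psi=0$ is the real-linear map $L(\psi)=i\sum_i g_i(u_i\cdot\psi_i)$, with image equal to the real span of $\{i\,u_i^{(k)}g_ie_k\}_{i,k}$. By choosing the $u_i$ with enough variety, this span equals $\R^{2N}$ with a uniform lower bound on the singular values of $L$ over $K^m$, and then the inverse function theorem/contraction mapping yields a continuous solution $\psi(z)$ for $z$ in a fixed neighborhood of the origin.

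\emph{Globalization.} To extend the section to all of $D_N$ I would cover $D_N$ by finitely many small patches admitting explicit continuous local sections obtained from the perturbative step (after translating the target by a point already reached), and glue them using a partition of unity on $D_N$, exploiting the group structure of $T_N^m$ under coordinatewise multiplication so that convex combinations of local $\psi$'s still produce $T_N$-valued functions; a Brouwer fixed point, as in Saias--Weingartner, handles the last step of correcting the residual error.

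The main obstacle I expect is the uniform transversality required in the perturbation step: for every $(g_1,\ldots,g_m)\in K^m$ one must choose a base configuration $(u_i)$ for which $L$ is surjective with a uniform lower bound, and one must propagate this control through the globalization. Compactness of $K^m$ is the essential tool, but the interaction of the closed constraint $\sum g_iu_i=0$ with the open transversality condition on $L$---and the need to avoid pathological $(g_i)$'s (such as $g_1=\cdots=g_m$) where the naive linearization has large kernel---requires careful bookkeeping in the choice of the $u_i$'s.
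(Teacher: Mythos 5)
The paper does not prove this proposition: it is quoted verbatim from Booker and Thorne (\cite{booker}, Proposition 3.2), so the only available comparison is with their argument. Your outline has the right overall shape (base configuration, linearization, globalization), but each stage has a genuine gap, and two of them are fatal as written. First, the base step. Exact cancellation in pairs, $g_iu_i+g_ju_j=0$ with $u_i,u_j\in T_N$, forces $u_j=-g_j^{-1}g_iu_i\in T_N$, and for nearby but distinct matrices this generically fails: if $g_j^{-1}g_i=\mathrm{diag}(1,1+\varepsilon)$ with $\varepsilon>0$, then $g_j^{-1}g_i$ maps $T_2$ entirely off $T_2$, so no choice of $u_i$ works. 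Matching indices whose matrices lie in a common cell of a finite cover therefore yields only $\sum_i g_iu_i=O(\varepsilon m)$ plus $O(1)$ per nonempty cell; for a fixed cover this error grows linearly in $m$, and shrinking $\varepsilon$ with $m$ inflates the number of cells. So the base point is neither exactly nor even boundedly at the origin, while the subsequent contraction argument reaches only a neighborhood of the base value whose radius is controlled by the second-order terms, not by $m$.

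Second, and more seriously, the globalization. Averaging the phase functions $\psi_\alpha$ through a partition of unity does keep the values in $T_N^m$, but it destroys the identity $\sum_i g_if_i(z)=z$ on the overlaps, which is the whole content of the statement; local sections of a surjection do not glue, and the obstruction is genuinely topological rather than something an unspecified Brouwer step can absorb. Already for $N=1$, $m=2$, $g_1=g_2=1$, the addition map $T_1^2\to D_1(2)$ admits \emph{no} continuous section over the full disk $D_1(2)$: the fibre over a boundary point $2e^{i\phi}$ is the single point $(e^{i\phi},e^{i\phi})$, so a section restricted to the boundary circle would be a loop of winding numbers $(1,1)$ in the torus, which is not null-homotopic although it bounds a disk through the section. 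Thus the existence of a continuous section depends delicately on $m$ being large relative to the radius of the target, and the construction must be organized globally (Booker and Thorne keep most coordinates essentially inert and use a controlled block of them to track $z$ across all of $D_N$), not assembled from local patches. The transversality concern you raise in the perturbation step is also real (all $g_i$ equal and all $u_i$ equal gives a linearization whose image is only $N$-real-dimensional in $\mathbb{R}^{2N}$), but it is the base-point and gluing steps that break the argument as proposed.
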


This is a fundamental ingredient for the proof of Proposition \ref{proposition:main}, together with the results of the previous section, as it is fundamental for Proposition 3.1 of \cite{booker}. 

We will also need a result on the conditional convergence of series, so let be $\{\omega_n\}_{n\in\N}$ be a sequence with values in $\{\pm 1\}$. Note that on the space of such sequences it is possible to put a probability measure (see for example \cite[\S 1.2--1.6]{kac}). In this setting, the following result is due to Rademacher, Paley and Zigmund (see \cite[\S 2.5--2.6]{kac}).

\begin{theorem}[Rademacher--Paley--Zygmund]\label{theorem:RPZ} Let $\{a_n\}_{n\in\N}\subset\R$. The following are equivalent:
\begin{enumerate}[a),leftmargin=.35in]
\item The probability that $\sum_n \omega_n a_n$ converges is 1.
\item $\sum_n |a_n|^2 < \infty$.
\end{enumerate}
\end{theorem}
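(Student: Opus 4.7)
The plan is to establish both implications via classical independent-sums theory. Setting $X_n=\omega_n a_n$ under the natural product probability measure on $\{\pm1\}^\N$, the $X_n$ are independent with $E[X_n]=0$ and $\mathrm{Var}(X_n)=a_n^2$, and the partial sums $S_N=\sum_{n\leq N}X_n$ form a martingale with respect to the natural filtration.

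For $(b)\Rightarrow(a)$, if $\sum_n a_n^2<\infty$ then $E[S_N^2]=\sum_{n\leq N}a_n^2$ is uniformly bounded, so $(S_N)$ is an $L^2$-bounded martingale and Doob's $L^2$-martingale convergence theorem immediately gives almost-sure convergence of $S_N$; equivalently, $\sum_n \omega_n a_n$ converges with probability $1$.

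For $(a)\Rightarrow(b)$, the cleanest route is to invoke Kolmogorov's three-series theorem directly applied to the $X_n$: a.s.\ convergence of $\sum_n X_n$ is equivalent to the simultaneous finiteness of
\[\sum_n P(|X_n|>1),\quad \sum_n E[X_n\,1_{|X_n|\leq 1}],\quad \sum_n \mathrm{Var}(X_n\,1_{|X_n|\leq 1}).\]
The middle series vanishes identically by the symmetry of $\omega_n$, the first counts the indices with $|a_n|>1$, and the third equals $\sum_{|a_n|\leq 1} a_n^2$. Their joint finiteness is exactly equivalent to $\sum_n a_n^2<\infty$, since the large and small $a_n$ contribute separately to the total square-sum, and each contribution must be finite individually.

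An alternative route for the failure direction proceeds via characteristic functions: $\varphi_{S_N}(t)=\prod_{n\leq N}\cos(ta_n)$, and the elementary bound $\log|\cos u|\leq -cu^2$ on $|u|\leq 1$ (together with $|\cos u|\leq 1$ always) forces $|\varphi_{S_N}(t)|\to 0$ on a punctured neighbourhood of $0$ whenever $\sum_n a_n^2=\infty$; L\'evy's continuity theorem then rules out any distributional limit for $S_N$, hence also any a.s.\ limit. The main (and really only) obstacle in this second route is the step from ``positive probability of divergence'' to ``a.s.\ divergence'', which is supplied for free by Kolmogorov's $0$--$1$ law on the tail event $\{\sum_n \omega_n a_n \text{ converges}\}$; the three-series approach bypasses this issue entirely and is therefore the one I would present.
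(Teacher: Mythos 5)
Your argument is correct, but note that the paper does not actually prove this statement: it is quoted as a known theorem with a pointer to Kac (\S 2.5--2.6 of the cited book), so there is no internal proof to compare against. Relative to that source your route is the standard modern packaging rather than the classical one. Kac works concretely with the Rademacher functions on $[0,1]$, proving sufficiency of $\sum_n a_n^2<\infty$ by an $L^2$/Kolmogorov-maximal-inequality argument and necessity by a Paley--Zygmund-type moment estimate combined with the zero--one law, essentially from scratch; you instead invoke Doob's $L^2$-bounded martingale convergence theorem for one direction and Kolmogorov's three-series theorem for the other. Both are legitimate: yours is shorter but imports heavier general theorems, Kac's is self-contained and elementary. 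Your three-series bookkeeping is right: with $X_n=\omega_n a_n$ one has $|X_n|=|a_n|$ deterministically, the truncated means vanish by symmetry, and the first and third series are jointly finite if and only if $\sum_n a_n^2<\infty$. One caveat on the ``alternative route'' you sketch and then discard: for a \emph{fixed} $t$ the product $\prod_n|\cos(ta_n)|$ need not tend to $0$ when $\sum_n a_n^2=\infty$ if infinitely many $|a_n|$ exceed $1/|t|$ (take $a_n=2\pi n$ and $t=1$), so that argument requires an additional averaging over $t$ beyond the obstacle you identify; since you correctly present the three-series argument as the main proof, this does not affect the validity of the proposal.
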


\begin{remark}\label{remark:RPZ_gen}
This theorem clearly may be extended to the case $\{a_n\}\subset\C$ taking the real and imaginary parts, and in general to the case $\{a_n\}\subseteq\R^N$ or $\C^N$, $N\geq 1$, since a finite intersection of measure 1 sets still has measure 1. Actually with an analogous argument it can be proven for $\{a_n\}$ belonging to any separable Hilbert space.
\end{remark}

Therefore we have the following.

\begin{corollary}\label{corollary:uniform_bound}
Let be given an integer $N\geq 1$, for $j=1,\ldots,N$ distinct elements $F_j(s)=\sum_{n}a_j(n)n^{-s}\in\mathcal{E}$, and suppose that they are pairwise orthogonal. Then, for any infinite subset $Q\subseteq \mathcal{P}$ there exist $\{\omega_p\}_{p\in Q}\subseteq\{\pm 1\}$ such that the vectors
$$v(\sigma)=\left(\sum_{p\in Q} \frac{a_1(p)\omega_p}{p^\sigma},\ldots,\sum_{p\in Q} \frac{a_n(p)\omega_p}{p^\sigma}\right)$$
are uniformly bounded for $\sigma\in [1,+\infty)$.
\end{corollary}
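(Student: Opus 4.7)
The plan is to combine the Rademacher--Paley--Zygmund theorem with a standard Abel summation argument. First, apply Theorem~\ref{theorem:RPZ} in the vector-valued form of Remark~\ref{remark:RPZ_gen} to the sequence $\{(a_1(p)/p,\ldots,a_N(p)/p)\}_{p\in Q}\subset\C^N$. By hypothesis~\ref{hp:RC}, the squared Euclidean norm of the $p$-th term is bounded by $(K_{F_1}^2+\cdots+K_{F_N}^2)/p^2$, which is summable over $p$. Hence almost every sign sequence makes all $N$ series $\sum_{p\in Q}\omega_p a_j(p)/p$ converge simultaneously, and we can pick one $\{\omega_p\}_{p\in Q}\subseteq\{\pm1\}$ realizing this event.

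Having fixed such signs, set $B_j(x)=\sum_{p\in Q,\,p\leq x}a_j(p)\omega_p/p$. Convergence yields a uniform bound $|B_j(x)|\leq M_j$ for all $x\geq 1$ and all $j$. For $\sigma>1$, absolute convergence (again by \ref{hp:RC}) and partial summation give
$$v_j(\sigma)=\lim_{x\to\infty}\left[B_j(x)\,x^{1-\sigma}+(\sigma-1)\int_{1}^{x}B_j(t)\,t^{-\sigma}\,dt\right]=(\sigma-1)\int_{1}^{\infty}B_j(t)\,t^{-\sigma}\,dt,$$
since the boundary term vanishes. Bounding $|B_j(t)|\leq M_j$ inside the integral yields $|v_j(\sigma)|\leq M_j$. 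At $\sigma=1$, the series equals $\lim_{x\to\infty}B_j(x)$, likewise bounded by $M_j$. Combining over $j$ gives a uniform bound on $v(\sigma)$ for $\sigma\in[1,+\infty)$.

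The only delicate point is arranging a \emph{single} choice of signs that works for all $N$ components simultaneously, which is precisely what the vector-valued extension of RPZ in Remark~\ref{remark:RPZ_gen} provides; the rest is the standard Abel summation device for upgrading conditional convergence at the edge of the half-plane into uniform boundedness to its right.
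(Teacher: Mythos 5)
Your proof is correct and follows essentially the same route as the paper: choose the signs via the vector-valued Rademacher--Paley--Zygmund theorem so that the series converge at $\sigma=1$, then use partial summation to propagate boundedness of the partial sums to all $\sigma\geq 1$. The only (harmless) difference is that you verify square-summability directly from \ref{hp:RC} via $\sum_p p^{-2}<\infty$, whereas the paper deduces it from \ref{hp:SC}; both are valid one-line checks.
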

\begin{proof}
Call $v_p(\sigma)$ the vector 
$$v_p(\sigma)=\left(\frac{a_1(p)}{p^\sigma},\ldots,\frac{a_n(p)}{p^\sigma}\right),$$
so that $v(\sigma)=\sum_{p\in Q} \omega_p v_p(\sigma)$, with $\{\omega_p\}$ still to be chosen. Note that by partial summation we have 
$$\sum_{p\in Q}|v_p(1)|^2\leq \int_{2^-}^\infty \frac{1}{x^2} \sum_{j=1}^N\sum_{p\leq x}\frac{|a_j(p)|^2}{p}dx\stackrel{\ref{hp:SC}}{\ll} \int_{2^-}^\infty \frac{\log\log x}{x^2}dx<+\infty.$$
Therefore, by the previous theorem and remark, we can surely find $\{\omega_p\}_{p\in Q}\subseteq\{\pm 1\}$ such that $v(1)$ is convergent. Moreover, again by partial summation, for any $\sigma\geq 1$ and any $j=1,\ldots,N$, we have
$$\abs{\sum_{p\in Q} \frac{a_j(p)\omega_p}{p^\sigma}}\leq 2\sup_{x>1} \abs{\sum_{\substack{p\in Q\\p\leq x}} \frac{a_j(p)\omega_p}{p}},$$
which is finite for the above choice of $\omega_p$ since the series converges. Hence, for any $\sigma\geq 1$, we have the uniform bound
$$|v(\sigma)|^2\leq 4\sum_{j=1}^N \sup_{x>1} \abs{\sum_{\substack{p\in Q\\p\leq x}} \frac{a_j(p)\omega_p}{p}}^2.$$
\end{proof}

We can now state and prove the key result of this section.

\begin{proposition}\label{proposition:linear_system}
Let be given a positive integer $N$, real numbers $\rho\geq1$ and $y>0$, and for $j=1,\ldots,N$, distinct elements $F_j(s)=\sum_{n}a_j(n)n^{-s}\in\mathcal{E}$. Suppose that $F_1,\ldots,F_N$ are pairwise orthogonal, then there exists $\eta>0$ such that for any $\sigma\in(1,1+\eta]$ we can find continuous functions $t_p: D_N(\rho)\rightarrow \R$ for each prime $p>y$ such that
\begin{equation}\label{eq:linear_system}
\sum_{p>y}\frac{a_j(p)}{p^{\sigma+it_p(z)}}=z_j,\quad j=1,\ldots,N,
\end{equation}
for any $z\in D_N(\rho)$.
\end{proposition}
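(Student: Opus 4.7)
The plan is to adapt Booker--Thorne's argument \cite[Prop.~3.1]{booker} by replacing their use of Dirichlet characters and residue classes (which produced well-conditioned matrices of character values) with the Dirichlet density bound of Corollary~\ref{corollary:positive_density}. The three main ingredients are: Corollary~\ref{corollary:positive_density}, which forces the normalized coefficient vectors to spread over every direction of $\C^N$; Proposition~\ref{proposition:unitary_combination_of_matrices} (Booker--Thorne's matrix lemma); and Corollary~\ref{corollary:uniform_bound} (Rademacher--Paley--Zygmund), which absorbs the primes not employed in the block construction into a uniformly bounded error term.

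Set $\tilde a_j(p)=a_j(p)/\sqrt{m_{F_j,F_j}}$ and $\tilde K=\max_j K_{F_j}/\sqrt{m_{F_j,F_j}}$, so that Corollary~\ref{corollary:positive_density} provides a uniform $\delta>0$ with $\underline{\delta}_{\mathcal{P}}(Q_v)\geq\delta$ for every unit vector $v\in\C^N$, where $Q_v=\{p>y\colon |\langle\tilde a(p),v\rangle|\geq 1/2\}$; in particular each $Q_v$ is infinite. Fix a finite $\eps$-net $\{v^{(1)},\ldots,v^{(M)}\}$ of the complex unit sphere with $\eps<1/(4\tilde K)$, so that $|\langle \tilde a(p),\,\cdot\,\rangle|$ varies by at most $1/4$ on $\eps$-balls. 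I would construct iteratively $m_0$ pairwise disjoint blocks $B_m=\{p_m^{(1)},\ldots,p_m^{(N)}\}\subset(y,Y]$ of $N$ primes as follows: within block $m$, having already picked $p_m^{(1)},\ldots,p_m^{(k-1)}$, let $V_k$ be the span of $\tilde a(p_m^{(j)})$ for $j<k$; select a $v^{(\ell_k)}$ from the net at distance $\leq\eps$ from some unit vector of $V_k^\perp$, and pick $p_m^{(k)}\in Q_{v^{(\ell_k)}}\cap(y,Y]$ distinct from the primes used so far (possible for $Y$ large, since each $Q_{v^{(\ell)}}$ is infinite and the net is finite). Gram--Schmidt on the columns of $A_m=(\tilde a_j(p_m^{(k)}))_{j,k}$ then yields $k$-th pivots of modulus $\geq 1/2-\tilde K\eps\geq 1/4$, whence $|\det A_m|\geq 4^{-N}$; combined with $|\tilde a_j(p)|\leq\tilde K$, this forces every $M_m=A_m\cdot\mathrm{diag}((p_m^{(k)})^{-\sigma})_k$ to lie in a common compact $K\subset\mathrm{GL}_N(\C)$, uniformly for $\sigma\in(1,1+\eta]$ and any fixed $\eta>0$.

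With $K$ and $m_0=m_0(K)$ (from Proposition~\ref{proposition:unitary_combination_of_matrices}) fixed, choose $Y$ large enough that $m_0$ blocks can be built in the above way. A rescaling of Proposition~\ref{proposition:unitary_combination_of_matrices} then yields continuous $f_m\colon D_N(\rho')\to T_N$ with $\sum_{m=1}^{m_0} M_m f_m(w)=w$ for every $w\in D_N(\rho')$. Applying Corollary~\ref{corollary:uniform_bound} to the infinite complement $Q=\mathcal{P}_y\setminus\bigsqcup_m B_m$ provides signs $\omega_p\in\{\pm 1\}$ for which $v(\sigma)=\sum_{p\in Q}\tilde a(p)\omega_p/p^\sigma$ satisfies $|v(\sigma)|\leq C$ uniformly in $\sigma\geq 1$. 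Taking $\rho'=\rho\cdot\max_j m_{F_j,F_j}^{-1/2}+C$, setting $u_p=\omega_p$ for $p\in Q$ and $u_p=(f_m(\tilde z-v(\sigma)))_k$ for $p=p_m^{(k)}\in B_m$ (where $\tilde z_j=z_j/\sqrt{m_{F_j,F_j}}$), one verifies $\sum_{p>y} a_j(p)u_p/p^\sigma=z_j$. Since $D_N(\rho)$ is contractible (a product of closed discs), each continuous $u_p\colon D_N(\rho)\to S^1$ lifts to a continuous $\theta_p\colon D_N(\rho)\to\R$ with $u_p=e^{i\theta_p}$, and $t_p(z)=-\theta_p(z)/\log p$ finishes the construction. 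The main obstacle is the block-construction step, where one must simultaneously place every $M_m$ in a common compact $K\subset\mathrm{GL}_N(\C)$ uniformly in $\sigma$, fit all $m_0$ blocks inside a single bounded range $(y,Y]$, and produce a uniform lower bound on the smallest singular value of each $A_m$; these constraints are intertwined (the shape of $K$ depends on $Y$, while $m_0=m_0(K)$ dictates how large $Y$ must be), and the $\eps$-net reduction is the device that turns the directional-density data of Corollary~\ref{corollary:positive_density} into finitely many, and hence uniformly usable, well-conditioned matrices.
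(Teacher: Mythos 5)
Your overall architecture (well-conditioned matrices fed into Proposition~\ref{proposition:unitary_combination_of_matrices}, plus Corollary~\ref{corollary:uniform_bound} to absorb the unused primes, plus a lift of the resulting circle-valued maps) matches the paper's, and your $\eps$-net device for extracting directions from Corollary~\ref{corollary:positive_density} is a reasonable idea. But there is a genuine gap in the block-construction step, precisely at the point you yourself flag as ``the main obstacle.'' Your matrices are $M_m=A_m\cdot\mathrm{diag}((p_m^{(k)})^{-\sigma})$, built from \emph{individual} primes, so $|\det M_m|\geq 4^{-N}\prod_k(p_m^{(k)})^{-\sigma}$ degrades with $Y$ and $\|M_m\|$ is of order $y^{-1}$: the compact $K$ you must feed into Proposition~\ref{proposition:unitary_combination_of_matrices} genuinely depends on $Y$, the threshold $m_0=m_0(K(Y))$ then depends on $Y$, and you need $Y$ large enough to accommodate $m_0(K(Y))\cdot N$ primes (and, separately, large enough that $\sum_{p\in\bigcup_m B_m}p^{-\sigma}\gg\rho'$, since the image of $\sum_m M_mf_m$ is confined to a ball of radius $\asymp\sum_m\|M_m\|$, which for fixed primes does \emph{not} grow as $\sigma\to1^+$). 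Proposition~\ref{proposition:unitary_combination_of_matrices} gives no quantitative control on how $m_0$ depends on $K$, so there is no argument that this circular system of constraints has a solution; the $\eps$-net only controls the conditioning of the unnormalized $A_m$ and does nothing to break the circularity for the actual $M_m$. Note also that you cannot sidestep this by applying the matrix lemma to the $A_m$ alone, because the diagonal factors $(p_m^{(k)})^{-\sigma}$ sit between $A_m$ and the unimodular outputs $f_m(w)_k$ and cannot be absorbed into maps valued in $T_N$.

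The paper resolves exactly this by taking each matrix column to come from a \emph{set of primes of positive Dirichlet density} rather than a single prime: the column vectors are $\frac{(2mN)^2}{\sum_{p>y}p^{-\sigma}}\sum_{p\in S_{ik}}\tilde a(p)\eps_p p^{-\sigma}$ with $\delta_{\tilde Q_u}(S_{ik})$ calibrated so that $\sum_{p\in S_{ik}}p^{-\sigma}/\sum_{p>y}p^{-\sigma}$ is bounded above and below by explicit constants for $\sigma$ near $1$. This places all matrices in a compact $K$ defined \emph{before} any primes are chosen (depending only on $\delta$, $N$, $K_{F_j}$, $m_{F_j,F_j}$), so $m_0$ is fixed first and the $2m_0N$ subsets are chosen afterwards with density $\delta/(2m_0N)^2$ each; and the covering of $D_N(\rho)$ is obtained for free by shrinking $\eta$ so that $\sum_{p>y}p^{-\sigma}\geq\rho C(2mN)^2$, using that this sum diverges as $\sigma\to1^+$. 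To repair your argument you would need to replace each $p_m^{(k)}$ by a positive-density subset of $Q_{v^{(\ell_k)}}$ and normalize as in the paper; the phase choice $\eps_p\langle\tilde a(p),u\rangle>0$ is then also needed so that the contributions within one set do not cancel, a point your single-prime version hides.
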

\begin{proof}The proof is an adaptation of that of Proposition 3.3 of Booker and Thorne \cite{booker}: as in \cite{booker} we want to construct inductively matrices belonging to a compact of $\mathrm{GL}_N(\C)$; the main difference is in the construction of these matrices and the fact that we take twice the matrices because we have a ``remainder'' term which we have to deal with. Hence, let be $m$ the integer $m_0$ obtained by Proposition \ref{proposition:unitary_combination_of_matrices} for the compact 
$$K=\left\{g\in\mathrm{GL}_N(\C)\mid \norma{g}\leq 2\delta \sqrt{N\sum_{j=1}^N K_{F_j}^2},\:\abs{\det g}\geq \left(\dfrac{\delta^2}{8}\right)^N\prod_{j=1}^N\sqrt{m_{F_j,F_j}}\right\},$$
where $\norma{g}^2=\sum_{i,j=1}^N |g_{i,j}|^2$, $\delta$ is given by Corollary \ref{corollary:positive_density}, $K_{F_j}$ by \ref{hp:RC}, and $m_{F_j,F_j}$ by \ref{hp:SC}. Now we want to construct inductively $2m$ matrices, namely $g_1,\ldots,g_{2m}$, all belonging to $K$.\\
For any fixed $i\in\{1,\ldots,2m\}$, take $u$ to be any vector in $\C^N$ with $|u|=1$, and $S_{i1}$ a subset of
$$\tilde{Q}_u=Q_u\backslash \bigcup_{j<i} (S_{j1}\cap Q_u)$$
with density $\delta_{\tilde{Q}_u}(S_{i1})=\displaystyle\frac{\delta}{(2mN)^2\overline{\delta}_{\mathcal{P}_y}(\tilde{Q}_u)}$ in $\tilde{Q}_u$ (for $i=1$, we take $\tilde{Q}_u=Q_u$), where $Q_u$ is the set of primes defined in Corollary \ref{corollary:positive_density}. We know that $S_{i1}$ exists by Lemma \ref{lemma:subset_given_density} since $\overline{\delta}_{\mathcal{P}_y}(\tilde{Q}_u)$ is greater or equal than
\begin{spliteq*}
\underline{\delta}_{\mathcal{P}_y}(\tilde{Q}_u) &\geq \underline{\delta}_{\mathcal{P}_y}(Q_u)-\sum_{j<i}\overline{\delta}_{\mathcal{P}_y}(Q_u\cap S_{j1}) \geq  \delta-\sum_{j<i}\overline{\delta}_{\mathcal{P}_y}(S_{j1})\\
&\geq \delta-(i-1)\frac{\delta}{(2mN)^2} \geq \frac{\delta}{2}.
\end{spliteq*}
Note that we have used the fact $\overline{\delta}_{\mathcal{P}_y}(S_{j1})=\delta_{\tilde{Q}_u}(S_{j1})\overline{\delta}_{\mathcal{P}_y}(\tilde{Q}_u)=\frac{\delta}{(2mN)^2}$.\\
Now, let $v_{i,1}$ be the column vector
$$v_{i,1}=\left(\frac{(2mN)^2}{\sum_{p>y}p^{-\sigma}}\sum_{p\in S_{i1}} \frac{a_j(p)\eps_p}{\sqrt{m_{F_j,F_j}}p^\sigma}\right)_{j=1,\ldots,N},$$
where $\eps_p\in T_1$ is such that $\eps_p\left(\frac{u_1a_1(p)}{\sqrt{m_{F_1,F_1}}}+\cdots+\frac{u_Na_N(p)}{\sqrt{m_{F_N,F_N}}}\right)>0$.

By induction, for $i\in\{1,\ldots,2m\}$ and $k\in\{2,\ldots,N\}$, we define the subset of primes $S_{ik}$ and the column vectors $v_{i,k}$ as follows. Take $u$ such that $|u|=1$ and $\conj{u}$ is orthogonal to the vector space generated by $v_{i,1},\ldots,v_{i,k-1}$. Then take $S_{ik}$ to be a subset of
$$\tilde{Q}_u=Q_u\backslash \left(\bigcup_{j<i}\bigcup_{\ell=1}^N (S_{j\ell}\cap Q_u)\cup \bigcup_{\ell=1}^{k-1} (S_{i\ell}\cap Q_u)\right)$$
with density $\delta_{\tilde{Q}_u}(S_{ik})=\displaystyle\frac{\delta}{(2mN)^2\overline{\delta}_{\mathcal{P}_y}(\tilde{Q}_u)}$ in $\tilde{Q}_u$. As before we know that $S_{ik}$ exists by Lemma \ref{lemma:subset_given_density} since $\overline{\delta}_{\mathcal{P}_y}(\tilde{Q}_u)$ is greater or equal than
\begin{spliteq}\label{eq:lower_bound_Q_tilde}
\underline{\delta}_{\mathcal{P}_y}(\tilde{Q}_u)& \geq \underline{\delta}_{\mathcal{P}_y}(Q_u)-\sum_{j< i}\sum_{\ell=1}^N\overline{\delta}_{\mathcal{P}_y}(S_{j\ell})-\sum_{\ell=1}^{k-1}\overline{\delta}_{\mathcal{P}_y}(S_{i\ell})\\
&\geq \delta-[(i-1)N+k-1]\frac{\delta}{(2mN)^2} \geq \frac{\delta}{2}.
\end{spliteq}
Then we define $v_{i,k}$ as
$$v_{i,k}=\left(\frac{(2mN)^2}{\sum_{p>y}p^{-\sigma}}\sum_{p\in S_{ik}} \frac{a_j(p)\eps_p}{\sqrt{m_{F_j,F_j}}p^\sigma}\right)_{j=1,\ldots,N},$$
where $\eps_p\in T_1$ is such that $\eps_p\left(\frac{u_1a_1(p)}{\sqrt{m_{F_1,F_1}}}+\cdots+\frac{u_Na_N(p)}{\sqrt{m_{F_N,F_N}}}\right)>0$.\\
Finally we set for every $i\in\{1,\ldots,2m\}$
$$g_i=\begin{pmatrix}\sqrt{m_{F_1,F_1}}&0 & \cdots&0 \\ 0 & \ddots &\ddots & \vdots\\ \vdots &\ddots &\ddots & 0 \\ 0 &\cdots & 0 & \sqrt{m_{F_N,F_N}}\end{pmatrix}(v_{i,1}|\ldots|v_{i,N})\in\mathrm{Mat}_{N\times N}(\C).$$

We need to show that the matrices $g_i$, $i=1,\ldots,2m$, belong to $K$. To this end we follow Booker and Thorne's method (see the proof of Proposition 3.3 of \cite{booker}), using the results of the previous section on densities of sets of primes.

To bound $\norma{g_i}$, we note that every coefficient of $g_i$ satisfies
$$\abs{\frac{(2mN)^2}{\sum_{p>y}p^{-\sigma}}\sum_{p\in S_{ik}} \frac{a_j(p)\eps_p}{p^\sigma}}\leq \frac{(2mN)^2}{\sum_{p>y}p^{-\sigma}}\sum_{p\in S_{ik}} \frac{\abs{a_j(p)}}{p^\sigma}\leq (2mN)^2 K_{F_j}\frac{\displaystyle\sum_{p\in S_{ik}}p^{-\sigma}}{\displaystyle\sum_{p>y}p^{-\sigma}}.$$
Since
$$\overline{\delta}_{\mathcal{P}_y}(S_{ik})=\delta_{\tilde{Q}_u}(S_{ik})\overline{\delta}_{\mathcal{P}_y}(\tilde{Q}_u)= \frac{\delta}{(2mN)^2},$$
there exists $\eta>0$ such that
$$\frac{\sum_{p\in S_{ik}}p^{-\sigma}}{\sum_{p>y}p^{-\sigma}}\leq 2\:\overline{\delta}_{\mathcal{P}_y}(S_{ik})= \frac{2\delta}{(2mN)^2}$$
for any $\sigma \in (1,1+\eta]$. Hence $\norma{g_i}\leq 2\delta \sqrt{N\sum_j K_{F_j}^2}$, $i=1,\ldots,2m$.

To bound $\abs{\det g_i}$, observe that, for any $k\in\{1,\ldots,N\}$, we have
\begin{spliteq*}
|v_{i,k}-\mathrm{proj}_{\langle v_{i,1},\ldots,v_{i,k-1}\rangle}v_{i,k}|&\geq \left\langle v_{i,k}-\mathrm{proj}_{\langle v_{i,1},\ldots,v_{i,k-1}\rangle}v_{i,k},\conj{u}\right\rangle \\
&= \left\langle v_{i,k},\conj{u}\right\rangle \\
&= \frac{(2mN)^2}{\sum_{p>y}p^{-\sigma}}\sum_{p\in S_{ik}} \frac{\abs{\frac{u_1a_1(p)}{\sqrt{m_{F_1,F_1}}}+\cdots+\frac{u_Na_N(p)}{\sqrt{m_{F_N,F_N}}}}}{p^\sigma}\\
&\geq \frac{(2mN)^2}{2}\frac{\sum_{p\in S_{ik}}p^{-\sigma}}{\sum_{p>y}p^{-\sigma}},
\end{spliteq*}
where $u$ is the norm-one vector used to construct $S_{ik}$ and, for the last step, we have used the fact that $S_{ik}\subseteq Q_u$. Reducing $\eta$ if necessary, we have that
\begin{spliteq*}
\frac{\sum_{p\in S_{ik}}p^{-\sigma}}{\sum_{p>y}p^{-\sigma}}&\geq \frac{1}{2}\underline{\delta}_{\mathcal{P}_y}(S_{ik})=\frac{1}{2}\delta_{\tilde{Q}_u}(S_{ik})\underline{\delta}_{\mathcal{P}_y}(\tilde{Q}_u)\\
&=\frac{1}{2}\frac{\delta}{(2mN)^2\overline{\delta}_{\mathcal{P}_y}(\tilde{Q}_u)}\underline{\delta}_{\mathcal{P}_y}(\tilde{Q}_u)\stackrel{\eqref{eq:lower_bound_Q_tilde}}{\geq} \frac{\delta^2}{4(2mN)^2}
\end{spliteq*}
for any $\sigma\in(1,1+\eta]$. Hence for any $\sigma\in (1,1+\eta]$ we have
$$|v_{i,k}-\mathrm{proj}_{\langle v_{i,1},\ldots,v_{i,k-1}\rangle}v_{i,k}|\geq \frac{\delta^2}{8},$$
and, by Gram–Schmidt orthogonalization, we obtain
$$|\det g_i|\geq \left(\dfrac{\delta^2}{8}\right)^N\displaystyle\prod_{j=1}^N\sqrt{m_{F_j,F_j}}.$$

Now, we define $S=\mathcal{P}_y\backslash \bigcup_{i=1}^{2m}\bigcup_{k=1}^N S_{ik}$ and we call $w_\sigma$ the ``remainder'' term, i.e. the column vector
$$w_\sigma=\left(\frac{(2mN)^2}{\sum_{p>y}p^{-\sigma}}\sum_{p\in S} \frac{a_j(p)\eps_p}{p^\sigma}\right)_{j=1,\ldots,N},$$
where $\{\eps_p\}_{p\in S}\subseteq\{\pm 1\}\subseteq T_1$ are chosen so that $w_\sigma$ is uniformly bounded by a constant $C\geq 1$ for $\sigma\in[1,1+\eta]$: we know that these exist by Corollary \ref{corollary:uniform_bound}.

In the following, we adapt Booker and Thorne's idea of applying Proposition \ref{proposition:unitary_combination_of_matrices} to the matrices just constructed. In fact, since we have the ``remainder'' term $w_\sigma$, we apply twice Proposition \ref{proposition:unitary_combination_of_matrices}, first to the first $m$ matrices and then to the remaining $m$ matrices to deal with $w_\sigma$.\\
Reducing again $\eta$ if necessary, we may suppose that $\sum_{p>y} p^{-\sigma}\geq \rho C(2mN)^2$ for any $\sigma\in (1,1+\eta]$. We fix such a $\sigma$ and we apply Proposition \ref{proposition:unitary_combination_of_matrices} to $(g_1,\ldots,g_m)\in K^m$, i.e. there exist continuous functions $f_1,\ldots,f_m: D_N\rightarrow T_N$ such that $\sum_{i=1}^m g_if_i(\tau)=\tau$ for all $\tau\in D_N$. Applying Proposition \ref{proposition:unitary_combination_of_matrices} to $(g_{m+1},\ldots,g_{2m})\in K^m$ we obtain $f_{m+1},\ldots,f_{2m}:D_N\rightarrow T_N$ such that $\sum_{i=m+1}^{2m} g_if_i(-w_\sigma)=-w_\sigma$. Summing up and setting for any $\tau\in D_N$ (for any fixed choice of a branch of the logarithm)
$$\theta_p(\tau) = \left\{
\begin{array}{ll}
-\log(\eps_p f_i(\tau)_k)/\log p & p\in S_{ik},\: i=1,\ldots,m,\: k=1,\ldots,N\\
-\log(\eps_p f_i(-w_\sigma)_k)/\log p & p\in S_{ik},\: i=m+1,\ldots,2m,\: k=1,\ldots,N\\
-\log(\eps_p)/ \log p & p\in S
\end{array}\right.$$
we have
\begin{spliteq*}
\sum_{p>y}\frac{a_j(p)}{p^{\sigma+i\theta_p(\tau)}}&=\sum_{i=1}^m \sum_{k=1}^N \sum_{p\in S_{ik}}\frac{a_j(p)\eps_p}{p^\sigma}f_i(\tau)_k\\
&\qquad\qquad+\sum_{i=m+1}^{2m} \sum_{k=1}^N \sum_{p\in S_{ik}}\frac{a_j(p)\eps_p}{p^\sigma}f_{i}(-w_\sigma)_{k}+\sum_{p\in S} \frac{a_j(p)\eps_p}{p^\sigma}\\
&=\frac{\sum_{p>y}p^{-\sigma}}{(2mN)^2}(\tau_j-w_{\sigma,j}+w_{\sigma,j})=\frac{\sum_{p>y}p^{-\sigma}}{(2mN)^2}\tau_j,
\end{spliteq*}
for $j=1,\ldots,N$. Since $\sum_{p>y} p^{-\sigma}\geq \rho (2mN)^2$, we can substitute $\tau = \frac{(2mN)^2}{\sum_{p>y}p^{-\sigma}}z$ for any $z\in D_N(\rho)$. Writing $t_p(z)=\theta_p\left(\frac{(2mN)^2}{\sum_{p>y}p^{-\sigma}} z\right)$, we obtain \eqref{eq:linear_system}.
\end{proof}

We adapt Lemma 2 of \cite{saias} and Proposition 3.1 of \cite{booker} for the class $\mathcal{E}$ as follows.

\begin{proposition}[Saias--Weingartner--Booker--Thorne]\label{proposition:SWBT} Let be given a positive integer $N$, for $j=1,\ldots,N$, distinct functions $F_j(s)=\sum_n a_j(n)n^{-s}\in\mathcal{E}$, and real numbers $R,y\geq 1$. Moreover, suppose that for any given $\rho \geq 1$ there exists $\eta>0$ such that for any $\sigma \in (1,1+\eta]$ there are continuous functions $t_p:D_N(\rho)\rightarrow \R$, for any prime $p>y$, satisfying
$$\sum_{p>y}\frac{a_j(p)}{p^{\sigma+it_p(z)}}=z_j,\quad j=1,\ldots,N,$$
for any $z=(z_1,\ldots,z_N)\in D_N(\rho)$. Then for all $\sigma\in (1,1+\eta]$ we have
\begin{spliteq*}
&\left\{\left(\prod_{p>y}F_{1,p}(\sigma+it_p),\ldots,\prod_{p>y}F_{N,p}(\sigma+it_p)\right)  \mid t_p\in\R\right\}\\
&\qquad\qquad\qquad\qquad\supseteq\left\{(z_1,\ldots,z_N)\in\C^N\mid \frac{1}{R}\leq |z_j|\leq R\right\}.
\end{spliteq*}
\end{proposition}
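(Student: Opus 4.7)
The plan is to reduce the target equation $\prod_{p>y}F_{j,p}(\sigma+it_p)=z_j$ to the logarithmic system $\sum_{p>y}\log F_{j,p}(\sigma+it_p)=\log z_j$ and then produce a solution by a Brouwer fixed point argument built on top of the hypothesis. First I would split each Euler factor via \ref{hp:EP} as
\[
\log F_{j,p}(\sigma+it_p)=\frac{a_j(p)}{p^{\sigma+it_p}}+E_{j,p}(t_p),\qquad E_{j,p}(t_p)=\sum_{k\geq 2}\frac{b_{F_j}(p^k)}{p^{k(\sigma+it_p)}},
\]
and observe that, for any $\sigma\geq 1$ and any choice of reals $(t_p)_{p>y}$, the total remainder $E_j(t):=\sum_{p>y}E_{j,p}(t_p)$ is bounded in absolute value by $\sum_{p}\sum_{k\geq 2}|b_{F_j}(p^k)|/p^k$, which is finite by \ref{hp:HH}; call the resulting bound $M_j$. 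Choosing the principal branch, every $z=(z_1,\ldots,z_N)$ with $1/R\leq|z_j|\leq R$ satisfies $|\log z_j|\leq\log R+\pi$, so setting $\rho=\log R+\pi+\max_j M_j$ and applying the hypothesis produces $\eta>0$ such that for every $\sigma\in(1,1+\eta]$ there are continuous functions $t_p:D_N(\rho)\to\R$ (for $p>y$) with $\sum_{p>y}a_j(p)p^{-(\sigma+it_p(w))}=w_j$ throughout $D_N(\rho)$.

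Fixing such a target $z$ and such a $\sigma$, I would then define
\[
\Phi:D_N(\rho)\to\C^N,\qquad \Phi(w)_j=\log z_j-E_j\bigl((t_p(w))_{p>y}\bigr),\quad j=1,\ldots,N.
\]
The choice of $\rho$ yields $|\Phi(w)_j|\leq\log R+\pi+M_j\leq\rho$, so $\Phi$ maps the compact convex set $D_N(\rho)$ into itself. Granting continuity, Brouwer's fixed point theorem supplies $w^*\in D_N(\rho)$ with $w^*_j=\log z_j-E_j(t(w^*))$; combining with the linear identity satisfied by the $t_p(w^*)$ yields
\[
\sum_{p>y}\log F_{j,p}(\sigma+it_p(w^*))=\sum_{p>y}\frac{a_j(p)}{p^{\sigma+it_p(w^*)}}+E_j(t(w^*))=w^*_j+E_j(t(w^*))=\log z_j,
\]
and exponentiation gives $\prod_{p>y}F_{j,p}(\sigma+it_p(w^*))=z_j$, as required.

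The delicate point is the continuity of $\Phi$, because it depends on the infinite family $(t_p(w))_{p>y}$ through $E_j$. Each summand of $E_j$ is, however, majorized in modulus by $|b_{F_j}(p^k)|/p^k$ independently of $w$ (and of the $t_p$'s), and by \ref{hp:HH} these dominants form a convergent series. Hence the series defining $E_j$ converges uniformly on $D_N(\rho)$; as each of its terms is continuous in $w$ (via the continuity of the corresponding $t_p$), the uniform limit $E_j$, and therefore $\Phi$, is continuous. This verifies the hypotheses of Brouwer's theorem and completes the argument.
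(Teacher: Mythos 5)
Your argument is correct and is essentially the proof the paper intends: the paper states this proposition without proof, referring to Lemma 2 of Saias--Weingartner and Proposition 3.1 of Booker--Thorne, and those proofs proceed exactly as you do --- split $\log F_{j,p}$ into the linear term $a_j(p)p^{-s}$ plus a remainder uniformly bounded via \ref{hp:HH}, enlarge $\rho$ accordingly, and absorb the remainder by a Brouwer fixed-point argument whose continuity hypothesis is verified by dominated/uniform convergence. No gaps.
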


It is clear that, by Proposition \ref{proposition:linear_system}, if $F_j\in\mathcal{E}$, $j=1,\ldots,N$, are pairwise orthogonal, then they satisfy the hypotheses of Proposition \ref{proposition:SWBT}. We have thus proven Proposition \ref{proposition:main}.

\section{Proof of Theorem \ref{theorem:main}}
\label{section:proof_main}

By hypothesis we have a polynomial 
$$P(X_1,\ldots,X_N; s) = \sum_{i=1}^M D_i(s)\prod_{j=1}^N X_j^{\alpha_{ij}},$$
with $D_i(s)=\sum_{n\in\langle Q_i\rangle} c_i(n)n^{-s}\in\mathcal{F}$ not identically zero and $\alpha_{ij}\in\N\cup\{0\}$. If we write $F(s)=P(F_1(s),\ldots,F_N(s);s)$, then clearly $F(s)$ is a Dirichlet series absolutely convergent for $\sigma>1$; thus, by Remark \ref{remark:bohr_almost_per}, we just need to prove that there exist $\sigma>1$ and $\varphi(n)$, completely multiplicative with $|\varphi(n)|=1$, such that $F^\varphi(\sigma)=0$.
Since $\varphi$ must be completely multiplicative, it is sufficient to define its values only on the primes. Moreover, since we must have $|\varphi(p)|=1$, we write $\varphi(p)=\exp{-it_p}$, with $t_p\in\R$ (yet to be determined), for every prime $p$.\\
Let $y$ be a fixed (non-integral) real number such that $\bigcup_{i=1}^M Q_i \subseteq \{p\in\mathcal{P}\mid p\leq y\}.$ Then consider the polynomial
\begin{spliteq*}
Q(X_1,\ldots,X_N;s)&=P\left(\prod_{p\leq y}F_{1,p}(s)X_1,\ldots,\prod_{p\leq y}F_{N,p}(s)X_N;s\right)\\
&=\sum_{i=1}^M \tilde{D}_i(s)\prod_{j=1}^N X_j^{\alpha_{ij}}.
\end{spliteq*}
Note that the coefficients $\tilde{D}_i(s)$ belong to $\mathcal{F}$, indeed they are clearly $p$-finite, while the absolute convergence for $\sigma=1$ comes from \ref{hp:HH} and the fact that the sum is over a finite number of primes.

To study the zeros of the polynomial $Q$ we use the following two lemmas of Booker and Thorne \cite{booker}.

\begin{lemma}[{\cite[Lemma 2.4]{booker}}]\label{lemma:zero_pol}
Let $P\in\C[x_1,\ldots,x_n]$. Suppose that every solution to the equation $P(x_1,\ldots,x_n)=0$ satisfies $x_1\cdots x_n=0$, then P is a monomial.
\end{lemma}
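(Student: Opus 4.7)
The plan is to apply Hilbert's Nullstellensatz over $\C$. The hypothesis says precisely that $V(P)\subseteq V(x_1x_2\cdots x_n)$ in $\C^n$ as algebraic sets. By the Nullstellensatz this gives the ideal containment $(x_1\cdots x_n)=\sqrt{(x_1\cdots x_n)}\subseteq\sqrt{(P)}$; in particular $x_1\cdots x_n\in\sqrt{(P)}$, so there is some integer $k\geq 1$ with $(x_1\cdots x_n)^k\in(P)$. Equivalently, $P$ divides $x_1^k x_2^k\cdots x_n^k$ in $\C[x_1,\ldots,x_n]$.

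To conclude I would use the UFD structure of $\C[x_1,\ldots,x_n]$: the variables $x_1,\ldots,x_n$ are pairwise non-associate irreducibles, so every divisor of $x_1^k\cdots x_n^k$ is, up to a nonzero constant, of the form $x_1^{a_1}\cdots x_n^{a_n}$ with $0\leq a_j\leq k$. Therefore $P$ itself is a scalar multiple of a monomial, which is exactly what the statement claims.

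I do not foresee a serious obstacle: the argument is essentially a two-line consequence of the Nullstellensatz, and the UFD step is routine. What is worth flagging, however, is that algebraic closure of the base field is essential — over $\R$, the polynomial $x_1^2+x_2^2$ vanishes only at the origin yet is not a monomial — so the proof genuinely relies on working in $\C[x_1,\ldots,x_n]$, which is the setting of the lemma.
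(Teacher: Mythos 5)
Your proof is correct, and it is essentially the argument of Booker and Thorne, whose Lemma 2.4 this paper cites without reproducing the proof: apply the Nullstellensatz to get $(x_1\cdots x_n)^k\in(P)$ and conclude by unique factorization. Your remark about the necessity of algebraic closure (e.g.\ $x_1^2+x_2^2$ over $\R$) is a sensible sanity check and the edge case $P$ a nonzero constant is harmlessly covered as a degree-zero monomial.
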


\begin{lemma}[{\cite[Lemma 2.5]{booker}}]\label{lemma:close_zero}
Let $P\in\C[x_1,\ldots,x_n]$ and suppose that $y\in\C^n$ is a zero of $P$. Then, for any $\eps>0$ there exists $\delta>0$ such that any polynomial $Q\in\C[x_1,\ldots,x_n]$, obtained by changing the non-zero coefficients of $P$ of at most $\delta$ each, has a zero $z\in\C^n$ with $\abs{y-z}<\eps$.
\end{lemma}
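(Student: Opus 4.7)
My plan is to reduce the problem to the one-variable case via a linear slice through $y$ and then apply Rouch\'e's theorem (equivalently, continuity of roots of univariate polynomials under coefficient perturbation). The hypothesis that only non-zero coefficients of $P$ are perturbed, and by at most $\delta$, translates into a uniform bound $|Q-P|\leq C\delta$ on any fixed compact set, which is exactly the form needed to invoke Rouch\'e against $P$ on the boundary of a small disc around $y$.

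First I would dispose of the trivial case $P\equiv 0$ (take $z=y$). Assuming $P\not\equiv 0$, I would choose a direction $v\in\C^n$ such that the univariate restriction $R(t):=P(y+tv)$ is not identically zero in $t$; such a $v$ exists because if $P(y+tv)\equiv 0$ for all $v$ then every derivative of $P$ at $y$ vanishes, forcing $P\equiv 0$. Since $P(y)=0$ we have $R(0)=0$ with some multiplicity $m\geq 1$. I would then pick $r>0$ small enough that $r|v|<\eps$ and $R$ has no other zeros in $|t|\leq r$, so that $c:=\min_{|t|=r}|R(t)|>0$. For the perturbed polynomial $Q$, set $S(t):=Q(y+tv)$. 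Bounding each monomial of $Q-P$ on the compact disc $|t|\leq r$ and summing over the finitely many monomials of $P$, one gets $|S(t)-R(t)|\leq C\delta$ for a constant $C$ depending on $P$, $y$, $v$, $r$ but not on $Q$. Choosing $\delta<c/C$ then makes $|S-R|<|R|$ on the circle $|t|=r$; Rouch\'e's theorem gives that $S$ has the same number of zeros as $R$ inside $|t|<r$, in particular at least one zero $t_0$. The point $z:=y+t_0v$ is then a zero of $Q$ with $|z-y|=|t_0||v|<r|v|<\eps$.

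The only step with any real content is picking the direction $v$ so that $R\not\equiv 0$: one needs to know that slicing does not kill the polynomial. As noted above, this is a Zariski-genericity argument using that $P\not\equiv 0$. The rest is a mechanical compactness-plus-Rouch\'e estimate, and the role of the restriction to \emph{non-zero} coefficients is only that this class of perturbations is naturally measured by a single scalar $\delta$ bounding the coefficient-wise deviation, which is precisely what the uniform bound $|S-R|\leq C\delta$ needs.
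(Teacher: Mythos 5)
Your argument is correct: the generic-slice step (choosing $v$ so that $t\mapsto P(y+tv)$ is not identically zero, e.g.\ by taking $v=w-y$ for any $w$ with $P(w)\neq 0$), the uniform bound $|S-R|\leq C\delta$ on the circle coming from the finitely many perturbed monomials, and the Rouch\'e comparison all go through. Note that the paper itself offers no proof of this lemma --- it is quoted verbatim from Booker and Thorne --- and your reduction to one variable followed by Rouch\'e's theorem is exactly the standard argument used there, so nothing further is needed.
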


Since the $p$-finite Dirichlet series $\tilde{D}_1(s),\ldots,\tilde{D}_N(s)$ are absolutely convergent for $\sigma\geq 1$, they are holomorphic in the half-plane $\sigma>1$ and extend with continuity on the line $\sigma=1$. Applying the maximum modulus principle to the function $\tilde{D}_1(s)\cdots\tilde{D}_N(s)$, which is not identically zero, we see that necessarily there exists $t_0\in\R$ such that $\tilde{D}_1(1+it_0),$ $\ldots,$ $\tilde{D}_N(1+it_0)$ are all non-zero. Therefore, applying Lemma \ref{lemma:zero_pol} to $Q(X_1,\ldots,X_N;1+it_0)$, we have that either $M=1$ or there exist $x_1,\ldots,x_N\in\C$, all non-zero, such that $Q(x_1,\ldots,x_N;1+it_0)=0$. Since in the first case we would have that $P$ is a monomial, we suppose that we are in the second case and we take $t_p=t_0$ for every prime $p\leq y$.

Let be $R\geq 2$ such that $\frac{2}{R}\leq |x_j|\leq \frac{R}{2}$, then, applying Lemma \ref{lemma:close_zero} with $\eps=\frac{1}{R}$, we obtain that there exists $\eta>0$ such that for any $\sigma\in(1,1+\eta]$ there exists $(z_1(\sigma),\ldots,z_N(\sigma))\in\C^N$ such that $Q(z_1(\sigma),\ldots,z_N(\sigma);\sigma+it_0)=0$ and $\frac{1}{R}\leq |z_j(\sigma)|\leq R$ for every $j$.

By Proposition \ref{proposition:main} for $R$ and $y$, we have that, possibly reducing $\eta$, for any $\sigma\in(1,1+\eta]$ there exist $t_p\in\R$ for every prime $p>y$ such that
$$z_j(\sigma)=\prod_{p>y}F_{j,p}(\sigma+it_p)=\prod_{p>y}F_{j,p}^\varphi(\sigma),\quad j=1,\ldots,N.$$
Hence, for any $\sigma\in(1,1+\eta]$ we have found $\varphi(n)$ (which depends on $\sigma$) such that
\begin{spliteq*}
F^\varphi(\sigma)&=\sum_{i=1}^M D_i^\varphi(\sigma)\prod_{j=1}^N F^\varphi_j(\sigma)^{\alpha_{ij}} = \sum_{i=1}^M \tilde{D}_i^\varphi(\sigma)\prod_{j=1}^N\prod_{p>y} F^\varphi_{j,p}(\sigma)^{\alpha_{ij}}\\
&=\sum_{i=1}^M \tilde{D}_i(\sigma+it_0)\prod_{j=1}^Nz_{j}(\sigma)^{\alpha_{ij}}=0.
\end{spliteq*}
The last part of the theorem follows, as we already said in Remark \ref{remark:bohr_almost_per}, by Rouch\'{e}'s theorem and almost periodicity.

\begin{acknowledgements}
We would like to express our sincere gratitude to Prof. Alberto Perelli and to Prof. Giuseppe Molteni for helpful discussions and valuable suggestions. We would also like to thank the referee for detecting some inaccuracies and suggesting improvements in the presentation.
\end{acknowledgements}

\bibliographystyle{amsplain}      
\bibliography{biblio}   

\end{document}